\newtheorem{thm}{Theorem}
\newtheorem{lem}[thm]{Lemma}
\newtheorem{conj}{Conjecture}
\newcommand{\be}{\begin{equation}}
\newcommand{\ee}{\end{equation}}
\newcommand{\la} {\langle}
\newcommand{\ra} {\rangle}
\newcommand{\se} {\subseteq}
\newcommand{\vf}{\varphi}
\newcommand{\s}{\sigma}
\renewcommand{\r}{\rho}
\renewcommand{\le}{\leqslant}
\renewcommand{\ge}{\geqslant}
\newcommand{\I}{\bm{1}}
\newcommand{\x}{\chi}
\newcommand{\rd}{\rightthreetimes}
\newcommand{\ov}{\overline}
\newcommand{\FF} {\mathbb{F}}
\newcommand{\ZZ} {\mathbb{Z}}
\newcommand{\RR} {\mathbb{R}}
\newcommand{\ba}[1] {\begin{array}{#1}}
\newcommand{\ea} {\end{array}}
\newcommand{\GL}{\mathop\mathrm{GL}\nolimits}
\newcommand{\PSL}{\mathop\mathrm{PSL}\nolimits}
\newcommand{\Sym}{\mathop\mathrm{Sym}\nolimits}
\newcommand{\SL}{\mathop\mathrm{SL}\nolimits}
\newcommand{\PsAut}{\mathop\mathrm{PsAut}\nolimits}
\newcommand{\OO}{\mathbb{O}}
\newcommand{\M}{\mathcal{M}}
\newcommand{\llb} {\llbracket}
\newcommand{\rrb} {\rrbracket}
\title{Multiplication formulas in Moufang loops}
\author{Alexander N. Grishkov and Andrei V. Zavarnitsine}
\thanks{Supported by FAPESP (proc. 2014/13730-0)}
\date{}
\begin{document}
\begin{abstract} Using groups with triality we obtain some general multiplication formulas in Moufang
loops, construct Moufang extensions of abelian groups, and describe the structure of
minimal extensions for finite simple Moufang loops over abelian groups.

{\sc Keywords:} Moufang loops, groups with triality, multiplication formula

{\sc MSC2010:} 08A05, 20E34, 20N05

\end{abstract}
\maketitle

\section{Introduction}

A {\em multiplication formula} for a loop $L$ with respect to its decomposition $L=MN$,
where $M,N$ are subloops of $L$,
is a rule of expressing the product of two elements of the form $mn$, with $m\in M$ and $n\in N$,
again in the same form that often assumes the multiplication within both $M$ and $N$
and some other information to be known.
Multiplication formulas generalize the concept of semidirect product
of groups and have been used to give explicit constructions of new loops.
Some examples of such formulas for Moufang loops can be found in \cite{che,raj,gag,gz_abc}.

Given a Moufang loop $L$ with a normal subloop $N$, we use its corresponding group
with triality $G$ to derive a general multiplication formula for the decomposition $L=MN$
where $M$ is a subloop. The formula takes especially simple form when the subgroup of $G$
corresponding to $N$ is abelian. We then give two important special examples of this
formula. The first one comes from modules for wreathlike triality groups
$G\times G\times G$. We obtain a criterion when these modules admit triality and write explicitly
the corresponding multiplication rule.
The second example may be viewed as a generalization of group action on associative algebras
to Moufang loop `action' on alternative algebras. Whenever a Moufang loop $M$ is mapped to
invertible elements $A^\times$ of an alternative algebra $A$, one can always construct
a semidirect product $M\rd U$ which is a Moufang loop, where $U$ is any subgroup of the additive group of $A$ invariant under the operators $L_{m,n}$ and $T_m$ for $m,n\in M$.

The Moufang loops that are upward extensions of abelian group by simple loops are of
special interest. We describe the structure of all known minimal such extensions for finite simple
noncyclic Moufang loops and conjecture that there are no others.

\section{Preliminaries}

For elements $x,y$ in a group, we set $[x,y]=x^{-1}y^{-1}xy$ and
define the conjugation operator $J_y : x\mapsto y^{-1}xy$. The symmetric group of
permutations of a set $X$ is denoted by $\Sym(X)$. The notation $V^{\oplus n}$ means the direct sum
of $n$ copies of a module $V$.

A loop $M$ is called a {\it Moufang loop} if $xy\cdot zx = (x\cdot yz)x$ for all $x,y,z \in M$,
where we use the common shorthand notation $x\cdot yz = x(yz)$.
For basic properties of Moufang loops, see \cite{bru}. Associative subloops of $M$ will be called subgroups.

If $(M,.)$ is a Moufang loop and $x,y\in M$ then we define
\be\label{ops} \ba{c}
 yL_x=x.y, \qquad yR_x=y.x, \qquad
 T_x=L_x^{-1}R_x,\qquad  P_x=L_x^{-1}R_x^{-1},  \\
L_{x,y}=L_xL_yL_{yx}^{-1}, \qquad R_{x,y}=R_xR_yR_{xy}^{-1}.
\ea \ee
We use the notation $\llb x,y\rrb=x^{-1}.y^{-1}.x.y $ instead of $[x,y]$ to
denote the commutator in $M$. We also set $(x,y,z)=(x.yz)^{-1}(xy.z)$.

A {\it pseudoautomorphism} of a Moufang loop
$(M,.)$  is a bijection $A: M\to M$
with the property that there exists an element $a
\in M$ such that
$$xA.(yA.a)=(x.y)A.a \qquad \text{for all} \quad x,y \in M. $$
This element $a$ is called a {\it companion} of $A$.
The set of pairs
$(A,a)$, where $A$ is a pseudoautomorphism of $Q$ with companion $a$,
is a group with respect to the operation
$$(A,a)(B,b)=(AB,aB.b).$$
This group is denoted by $\PsAut(M)$.
It is known \cite[Lemma VII.2.2]{bru} that $(T_x,x^{-3})$ and
$(R_{x,y},\llb x,y\rrb)$ belong to $\PsAut(M)$.

A group $G$ possessing automorphisms $\rho$ and $\sigma$ that satisfy $\rho^3=\sigma^2=(\rho\sigma)^2=1$ is
called a {\it group with triality $\langle\rho,\sigma\rangle$} if
$$
(x^{-1}x^{\sigma})(x^{-1}x^{\sigma})^\rho(x^{-1}x^{\sigma})^{\rho^2}=1
$$
for every $x$ in $G$. In a group $G$ with triality $S=\langle\rho,\sigma\rangle$,
the set
$$\M(G)=\{ x^{-1}x^{\sigma}\ |\ x\in G\}$$
is a Moufang loop with respect to the multiplication
\begin{equation} \label{loop_mult}
m.n=m^{-\rho} n  m^{-\rho^2}
\end{equation}
for all $n,m\in \M(G)$. Conversely, every Moufang loops arises so from a suitable group with triality.
For more information on the relation between groups with triality and Moufang loops, see \cite{gz_tri,dor}.

$S$-invariant subgroups of a group $G$ with triality (for short, $S$-subgroups) are also groups triality $S$,
whose corresponding Moufang loops are subloops of $\M(G)$.
If $G$ is a group with triality then, for $g\in G$, we define

$$
\vf(g)=g^{-\r}g^{\r^2}.
$$

\begin{lem}\label{gzt} Let $G$ be a group with triality
$S=\langle\rho,\sigma\rangle$.  Denote $M=\M(G)$ and $H=C_G(\s)$.
Then, for all $m,n,l\in M$ and $h\in H$, we have
\begin{enumerate}
\item[$(i)$] $m,m^\r,m^{\r^2}$ pairwise commute;
\item[$(ii)$] $m^{-\r} nm^{-\r^2}=n^{-\r^2} m n^{-\r}$;
\item[$(iii)$] $[n^{\r^2},m^{-\r}]=[n^{-\r},m^{\r^2}]\in H$;
\item[$(iv)$] $m.n.m = mnm$;
\item[$(v)$] $\vf(H)\se M$ and $\vf(M)\se H$;
\item[$(vi)$] $M^h=M$; moreover, $(J_h,\vf(h))\in \PsAut(M)$;
\item[$(vii)$] for $k=[m^\r,n^{-\r^2}]$, we have  $(J_k,\vf(k))=(R_{m,n},\llb m,n\rrb)$;
\item[$(viii)$] for $k=\vf(m)$, we have $(J_k,\vf(k))=(T_m,m^{-3})$;
\item[$(ix)$] $(l,m,n)=[k,l]^{l^{\r^2}}$, where $k=[n^{-\r},m^{\r^2}]$.
\end{enumerate}
\end{lem}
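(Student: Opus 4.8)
The plan is to prove every item by direct manipulation inside $G$, using only that $\r,\s$ are automorphisms, the defining triality relation, and the $\PsAut$ facts quoted above. I would first record the constants of the computation. From $\s^2=\r^3=(\r\s)^2=1$ one gets $\r^\s=\r^2$, whence for $m\in M=\M(G)$ (so that $m^\s=m^{-1}$) the relations $(m^\r)^\s=m^{-\r^2}$ and $(m^{\r^2})^\s=m^{-\r}$; the triality relation applied to $m$ reads $m\,m^\r m^{\r^2}=1$, which, together with the pairwise commuting $(i)$ (a standard triality fact, see \cite{gz_tri}), yields the rewriting rules $m^{-1}m^{-\r}=m^{\r^2}$, $m^{-\r^2}m^{-1}=m^\r$, $m^{-\r}m^{-\r^2}=m$ and $m\,m^{\r^2}=m^{-\r}$, and shows that the loop inverse of $m$ coincides with its inverse in $G$. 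With these, $(iv)$ and $(v)$ are one-line computations (e.g. $\vf(h)=h^{-\r}h^{\r^2}=x^{-1}x^\s$ for $x=h^\r$ when $h\in H$, and $\vf(m)^\s=\vf(m)$ by $(i)$), while $(ii)$ and $(iii)$ follow by expanding the words and applying the $\s$-relations; in $(iii)$ the equality of the two commutators makes their common value $\s$-fixed, hence in $H$.

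For $(vi)$--$(viii)$ I would proceed uniformly. Each asserts that $(J_k,\vf(k))$ is a prescribed element of $\PsAut(M)$ for a suitable $k\in H$. Given $(vi)$ (that conjugation by $k\in H$ preserves $M$ and $(J_k,\vf(k))\in\PsAut(M)$, checked from the loop formula \eqref{loop_mult}), it remains to identify $J_k|_M$ with the named operator and to compute the companion $\vf(k)$. Thus for $(viii)$ I would take $k=\vf(m)\in H$ (by $(v)$), verify $lJ_k=lT_m$ for all $l\in M$ by expanding both via \eqref{loop_mult}, and compute $\vf(\vf(m))=m^{-3}$; for $(vii)$ likewise with $k=[m^\r,n^{-\r^2}]$, matching $J_k|_M$ with $R_{m,n}$ and $\vf(k)$ with $\llb m,n\rrb$. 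In each case the companion produced equals the one in the Bruck data, so the two pseudoautomorphisms coincide.

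The substantive item is $(ix)$, which I would prove by computing both sides as words in $l,m,n$ and their $\r$-images and reducing them to the same normal form. Expanding $m.n=m^{-\r}nm^{-\r^2}$ and the needed powers of $l.m$, I get $P:=l.(m.n)=l^{-\r}m^{-\r}nm^{-\r^2}l^{-\r^2}$ and $Q:=(l.m).n=l\,m^{-\r}l^{\r^2}\,n\,l^\r m^{-\r^2}l$. Since loop and group inverses agree, $(l,m,n)=P^{-1}.Q=P^\r QP^{\r^2}$; substituting the expansions, the two $l^{\pm1}$ junctions cancel and the two $m$-junctions collapse by the triality rules $m^{-1}m^{-\r}=m^{\r^2}$ and $m^{-\r^2}m^{-1}=m^\r$, leaving
\[
(l,m,n)=l^{-\r^2}m^{-\r^2}n^\r m^{\r^2}\;l^{\r^2}nl^\r\;m^\r n^{\r^2}m^{-\r}l^{-\r}.
\]
On the other side, writing $k=[n^{-\r},m^{\r^2}]=n^\r m^{-\r^2}n^{-\r}m^{\r^2}$, conjugating $[k,l]$ by $l^{\r^2}$ and using $l\,l^{\r^2}=l^{-\r}$ gives
\[
[k,l]^{l^{\r^2}}=l^{-\r^2}m^{-\r^2}n^\r m^{\r^2}\;n^{-\r}l^{-1}n^\r m^{-\r^2}n^{-\r}m^{\r^2}\;l^{-\r}.
\]

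Both words share the prefix $l^{-\r^2}m^{-\r^2}n^\r m^{\r^2}$ and the suffix $l^{-\r}$, so it remains to equate the middle factors $l^{\r^2}nl^\r\,m^\r n^{\r^2}m^{-\r}$ and $n^{-\r}l^{-1}n^\r m^{-\r^2}n^{-\r}m^{\r^2}$. Here the two earlier identities close the argument: applying $(ii)$ with $(m,n)\mapsto(n,l^{-1})$ gives $l^{\r^2}nl^\r=n^{-\r}l^{-1}n^{-\r^2}$, so the first middle factor becomes $n^{-\r}l^{-1}n^{-\r^2}m^\r n^{\r^2}m^{-\r}$; cancelling the common head $n^{-\r}l^{-1}$, the required equality is precisely $n^{-\r^2}m^\r n^{\r^2}m^{-\r}=n^\r m^{-\r^2}n^{-\r}m^{\r^2}$, that is $[n^{\r^2},m^{-\r}]=[n^{-\r},m^{\r^2}]$, which is $(iii)$. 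I expect the only real difficulty to be bookkeeping: carrying the $\r$-exponents modulo $3$ through $P^\r$, $Q$, $P^{\r^2}$ so that every junction cancels and the reduced word above emerges; once it does, the identification with $[k,l]^{l^{\r^2}}$ is forced by the single instance of $(ii)$ and by $(iii)$.
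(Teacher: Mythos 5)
Your proposal is correct, and for the only item the paper actually proves, $(ix)$, it takes a genuinely different route. For $(i)$--$(viii)$ the paper simply cites \cite[Lemmas 2,4]{gz_tri}; your sketched derivations (the rewriting rules $mm^\r m^{\r^2}=1$, $m^\s=m^{-1}$, $(m^\r)^\s=m^{-\r^2}$, etc.) are essentially the computations carried out there, so nothing is at issue. For $(ix)$ the paper argues at the loop level: it rewrites $(l,m,n)=(mn)^{-1}(m.nl^{-1}).l=l^{-1}L_{n,m}.l$, then uses $(vii)$ together with $L_{n,m}=R_{n^{-1},m^{-1}}$ (Lemma~\ref{dxy}$(ii)$) to replace $L_{n,m}$ by conjugation by $k=[n^{-\r},m^{\r^2}]$, after which $[k,l]^{l^{\r^2}}$ falls out in one more line. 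You instead expand $P=l.(m.n)$ and $Q=(l.m).n$ as words in $G$, form $P^{\r}QP^{\r^2}$, and reduce; I have checked that your reduction closes exactly as stated: the $l^{\pm1}$ junctions cancel, the $m$-junctions collapse via $m^{-1}m^{-\r}=m^{\r^2}$ and $m^{-\r^2}m^{-1}=m^{\r}$, the substitution $ll^{\r^2}=l^{-\r}$ produces the displayed form of $[k,l]^{l^{\r^2}}$, and the two middle factors are matched by a single application of $(ii)$ (with $(m,n)\mapsto(n,l^{-1})$) followed by $(iii)$. What your route buys is self-containedness: it uses only $(ii)$ and $(iii)$ and avoids the paper's forward reference to Lemma~\ref{dxy}$(ii)$, which is stated and proved only after Lemma~\ref{gzt}. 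What it costs is the bookkeeping of $\r$-exponents that the paper's operator-level argument sidesteps entirely.
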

\begin{proof} $(i)$--$(viii)$ See \cite[Lemmas 2,4]{gz_tri}.

$(ix)$ We have
\begin{multline*}
(l,m,n)=(l.mn)^{-1}(lm.n)=(mn)^{-1}(m.nl^{-1}).l=l^{-1}L_{n,m}.l\\
=l^{-1}J_k.l=l^{-\r^2}(l^{-1}J_k)ll^{\r^2}=[k,l]^{l^{\r^2}},
\end{multline*}
where the second equality follows from multiplying both sides by $l^{-1}$ and using the Moufang identities. We have also used $(vii)$ keeping in mind that $L_{n,m}=R_{n^{-1},m^{-1}}$, see Lemma \ref{dxy}$(ii)$.
\end{proof}

\section{Auxiliary results}

\begin{lem}\label{act_x}
Let $G$ be a group with triality $S=\langle\rho,\sigma\rangle$. Denote $M=\M(G)$ and $H=C_G(\s)$. We have
\begin{enumerate}
\item[$(i)$] $g^{-1}mg^\s \in M$ for every $g\in G$ and every $m\in M$.
\item[$(ii)$] The map $\x:G\to \Sym(M)$, $g\mapsto \x_g$, given by $m\x_g = g^{-1}mg^\s$, $m\in M$, is a group homomorphism with $\ker\x = C_H(M)$.
\end{enumerate}
\end{lem}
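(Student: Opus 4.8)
The plan is to reduce everything to the single algebraic fact that $\s$ is an automorphism of $G$, so that it distributes over products as $(xy)^\s=x^\s y^\s$. For $(i)$ I would start by writing an arbitrary $m\in M$ in the defining form $m=x^{-1}x^\s$ with $x\in G$. Then I would compute $g^{-1}mg^\s=g^{-1}x^{-1}x^\s g^\s$ and regroup this, using $(xg)^\s=x^\s g^\s$, as $(xg)^{-1}(xg)^\s$. This last expression is of the form $y^{-1}y^\s$ with $y=xg$, hence lies in $M=\{y^{-1}y^\s\mid y\in G\}$, which is exactly $(i)$.

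For $(ii)$, by $(i)$ each $\x_g$ at least maps $M$ into $M$, so the only issues are the homomorphism property, bijectivity, and the kernel. I would first verify the composition law directly: for $g,h\in G$ and $m\in M$, applying $\x_g$ and then $\x_h$ gives $h^{-1}(g^{-1}mg^\s)h^\s$, which regroups (again using that $\s$ is an automorphism) as $(gh)^{-1}m(gh)^\s=m\x_{gh}$. Thus $\x_g\x_h=\x_{gh}$ as self-maps of $M$. Since $\x_1=\mathrm{id}_M$, this identity shows $\x_g$ and $\x_{g^{-1}}$ are mutually inverse, so each $\x_g$ is a bijection of $M$ and $\x$ is a genuine group homomorphism into $\Sym(M)$.

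The remaining point is the kernel, and the one small observation that makes it go through is that $1\in M$ (take $x=1$ in the defining form). An element $g$ lies in $\ker\x$ precisely when $g^{-1}mg^\s=m$ for all $m\in M$. Feeding in $m=1$ forces $g^{-1}g^\s=1$, i.e.\ $g^\s=g$, so $g\in H=C_G(\s)$. Once $g\in H$, the defining relations collapse to $g^{-1}mg=m$ for all $m\in M$, i.e.\ $g$ centralizes $M$, giving $g\in C_H(M)$; the converse inclusion $C_H(M)\se\ker\x$ is immediate since such $g$ fixes $\s$ and commutes with every $m$. Hence $\ker\x=C_H(M)$.

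I do not expect a genuine obstacle here: the statement is essentially formal, resting entirely on $\s$ being an automorphism together with the explicit description of $M$. The only places demanding a little care are the bookkeeping of factor orders when regrouping $(gh)^{-1}m(gh)^\s$, and remembering to use $1\in M$ to pin down $g\in H$ before reading off the centralizer condition.
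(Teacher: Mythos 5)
Your proof is correct and follows essentially the same route as the paper's: part $(i)$ is the identical regrouping $g^{-1}mg^\s=(xg)^{-1}(xg)^\s$, and the kernel computation in $(ii)$ (feed in $m=1$ to force $g\in H$, then read off the centralizer condition) is exactly the paper's argument. The only difference is that you spell out the verification of $\x_{g}\x_{h}=\x_{gh}$ and the resulting bijectivity, which the paper dismisses with ``clearly.''
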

\begin{proof} $(i)$ Since $m\in M$, we have $m=x^{-1}x^\s$ for some $x\in G$. Then $g^{-1}mg^\s=(xg)^{-1}(xg)^\s\in M$.

$(ii)$ Clearly, $\x_g\in \Sym(M)$ and $\x_{g_1}\x_{g_2}=\x_{g_1g_2}$. If $g\in \ker\x$ then $1=1\x_g=g^{-1} g^\s$, which yields $g\in H$. Moreover, $m = m\x_g=g^{-1}mg$ for very $m\in M$, and so $g\in C_H(M)$. The reverse inclusion is obvious.
\end{proof}

The permutation action $\x$ of $G$ on $M$ defined in Lemma \ref{act_x}$(ii)$ has the following implications.
First, the three subsets $M,M^\r,M^{\r^2}$ of $G$ are naturally permuted by $S$ and
their corresponding action $\x$ on $M$ is given by the following.
\begin{lem}\label{m_x} For every $m\in M$, we have
$$\x_m=P_m; \qquad \x_{m^\r}=L_m; \qquad \x_{m^{\r^2}}=R_m.$$
\end{lem}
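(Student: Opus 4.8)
The plan is to evaluate each of the three permutations directly from the definition $n\x_g=g^{-1}ng^\s$ and to recognize the result as $L_m$, $R_m$, or $P_m$. Everything reduces to two elementary observations that I would record first. Since $m\in M$ means $m=x^{-1}x^\s$ for some $x\in G$, we get $m^\s=x^{-\s}x=m^{-1}$, so $\s$ inverts each element of $M$. Moreover, the defining relations $\r^3=\s^2=(\r\s)^2=1$ give $\s\r\s=\r^{-1}$, and hence the conjugation rules $\r\s=\s\r^2$ and $\r^2\s=\s\r$ inside $\langle\r,\s\rangle$.

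Next I would dispose of the two easy cases. For any $n\in M$,
\[
n\x_{m^\r}=m^{-\r}\,n\,(m^\r)^\s=m^{-\r}\,n\,m^{\s\r^2}=m^{-\r}\,n\,m^{-\r^2},
\]
where the middle step uses $\r\s=\s\r^2$ and the last uses $m^\s=m^{-1}$; by (\ref{loop_mult}) the right-hand side is $m.n=nL_m$, so $\x_{m^\r}=L_m$. Symmetrically, $\r^2\s=\s\r$ yields $(m^{\r^2})^\s=m^{-\r}$, giving $n\x_{m^{\r^2}}=m^{-\r^2}\,n\,m^{-\r}$; applying Lemma \ref{gzt}$(ii)$ with $m$ and $n$ interchanged rewrites this as $n^{-\r}m\,n^{-\r^2}=n.m=nR_m$, so $\x_{m^{\r^2}}=R_m$.

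For the last identity I would avoid computing $\x_m$ by hand and instead use that $\x$ is a homomorphism (Lemma \ref{act_x}$(ii)$). Applied to $m=x^{-1}x^\s$, the triality relation is exactly $m\cdot m^\r\cdot m^{\r^2}=1$ in $G$, so
\[
\x_m\,L_m\,R_m=\x_m\,\x_{m^\r}\,\x_{m^{\r^2}}=\x_{m\,m^\r m^{\r^2}}=\x_1=\mathrm{id},
\]
whence $\x_m=(L_mR_m)^{-1}=R_m^{-1}L_m^{-1}$. The only genuine subtlety is that this must be reconciled with the target $P_m=L_m^{-1}R_m^{-1}$: the two agree because $L_m$ and $R_m$ commute, which is the flexible law $(m.n).m=m.(n.m)$ valid in every Moufang loop (cf.\ Lemma \ref{gzt}$(iv)$). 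Thus $\x_m=P_m$. I expect the only places an error could creep in to be the exponent bookkeeping with $\r\s$ and $\r^2\s$ and this final reconciliation of order; the rest is direct substitution.
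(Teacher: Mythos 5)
Your proof is correct. The identities $\x_{m^\r}=L_m$ and $\x_{m^{\r^2}}=R_m$ are obtained exactly as in the paper: one computes $(m^\r)^\s=m^{-\r^2}$ and $(m^{\r^2})^\s=m^{-\r}$ from $m^\s=m^{-1}$ and the braid relations in $S$, reads off $m^{-\r}nm^{-\r^2}=m.n$ from (\ref{loop_mult}), and flips the second case via Lemma \ref{gzt}$(ii)$. Where you diverge is the first identity: the paper simply computes $n\x_m=m^{-1}nm^{-1}$ and identifies this with $m^{-1}.n.m^{-1}=nP_m$ via Lemma \ref{gzt}$(iv)$, whereas you deduce $\x_m=(L_mR_m)^{-1}$ from the homomorphism property of $\x$ (Lemma \ref{act_x}$(ii)$) together with the triality identity $m\,m^\r m^{\r^2}=1$, and then invoke flexibility to reorder $R_m^{-1}L_m^{-1}$ into $P_m=L_m^{-1}R_m^{-1}$. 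Both routes are sound and both ultimately lean on the flexible law (which is what makes $m^{-1}.n.m^{-1}$ unambiguous in the paper's version); yours is a nice illustration of how the triality identity itself is reflected in the permutation representation $\x$, at the modest cost of needing the other two cases and the homomorphism property first, while the paper's direct computation keeps the three cases independent and uniform.
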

\begin{proof} For every $n\in M$, we have
\begin{align*}
&n\x_m=m^{-1}nm^{-1} = m^{-1}.n.m^{-1}=nP_m;\\
&n\x_{m^\r}=m^{-\r}nm^{-\r^2} = m.n = nL_m;\\
&n\x_{m^{\r^2}}=m^{-\r^2}nm^{-\r} = n.m = nR_m\\
\end{align*}
by Lemma \ref{gzt}, and the claim follows.\end{proof}
In particular, Lemma \ref{m_x} demonstrates how the classical triality for Moufang loops
\be
\ba{c}
P_{x}\ \stackrel{\r}{\longmapsto} \ L_{x} \
\stackrel{\r}{\longmapsto} \ R_{x} \
\stackrel{\r}{\longmapsto} \ P_{x},\\
P_{x}\ \stackrel{\s}{\longmapsto} \ P_{x}^{-1}, \qquad
L_{x}\ \stackrel{\s}{\longmapsto}\ R_{x}^{-1}, \qquad
R_{x}\ \stackrel{\s}{\longmapsto}\ L_{x}^{-1}.
\ea \ee
can be observed from the permutation action $\x$ of $G$ on $M$.

Also, the action $\x$ restricted to the subgroups $H,H^\r,H^{\r^2}$, where $H=C_G(\s)$, is related to
the induced pseudoautomorphisms $J_h$, $h\in H$, of $M$ and their companions $\vf(h)$, see Lemma \ref{gzt}$(vi)$, in the following manner.

\begin{lem}\label{h_x} For every $h\in H$, we have
$$\x_h=J_h; \qquad \x_{h^\r}=J_hR_{\vf(h)}; \qquad \x_{h^{\r^2}}=J_hL_{\vf(h)^{-1}}.$$
\end{lem}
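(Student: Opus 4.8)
The first identity is immediate: since $h\in H=C_G(\s)$ we have $h^\s=h$, so $m\x_h=h^{-1}mh^\s=h^{-1}mh=mJ_h$ for every $m\in M$, giving $\x_h=J_h$.

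For the two remaining identities the plan is to factor $h^\r$ and $h^{\r^2}$ inside $G$ as a product of $h$ with a suitable $\r$-conjugate of an element of $M$, and then invoke the homomorphism property of $\x$ (Lemma \ref{act_x}$(ii)$) together with the explicit values of $\x$ on $M^\r$ and $M^{\r^2}$ from Lemma \ref{m_x}. Concretely, expanding $\vf(h)=h^{-\r}h^{\r^2}$ and applying $\r$ and $\r^2$, while using $\r^3=1$, I would record
\be
\vf(h)^{\r^2}=h^{-1}h^\r, \qquad \vf(h)^{\r}=h^{-\r^2}h.
\ee
Rearranging these in $G$ yields the factorizations
\be
h^\r=h\,\vf(h)^{\r^2}, \qquad h^{\r^2}=h\,(\vf(h)^{-1})^{\r},
\ee
where in the second identity I use that $\r$ commutes with inversion.

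To finish, recall that $\vf(h)\in M$ by Lemma \ref{gzt}$(v)$, hence $\vf(h)^{-1}\in M$ as well. Applying the homomorphism $\x$ to the two factorizations and using $\x_h=J_h$ together with $\x_{\vf(h)^{\r^2}}=R_{\vf(h)}$ and $\x_{(\vf(h)^{-1})^{\r}}=L_{\vf(h)^{-1}}$ from Lemma \ref{m_x}, I obtain
\be
\x_{h^\r}=\x_h\,\x_{\vf(h)^{\r^2}}=J_hR_{\vf(h)}, \qquad \x_{h^{\r^2}}=\x_h\,\x_{(\vf(h)^{-1})^{\r}}=J_hL_{\vf(h)^{-1}},
\ee
which is what is claimed.

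There is no genuine difficulty here; the only thing needing care is the exponent bookkeeping with $\r^3=1$ and, above all, spotting that $h^\r$ and $h^{\r^2}$ can be routed through $M$ so that Lemma \ref{m_x} applies -- that recognition is the crux of the argument. A more pedestrian alternative would avoid the factorization altogether: compute $m\x_{h^\r}=h^{-\r}mh^{\r\s}$ directly, reduce the exponent via $\r\s=\s\r^2$ and $h^\s=h$ to reach $h^{-\r}mh^{\r^2}$, and then expand $mJ_hR_{\vf(h)}=(h^{-1}mh).\vf(h)$ through \eqref{loop_mult}, the cancellation resting on $m^{-\r}m^{-\r^2}=m$ (the triality relation together with Lemma \ref{gzt}$(i)$); but the factorization route is cleaner and I would prefer it.
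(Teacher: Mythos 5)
Your proof is correct, and it takes a recognizably different route from the paper's. The paper argues by direct computation in $G$: it writes $m\x_{h^\r}=h^{-\r}mh^{\r^2}$, inserts $h^{-1}h$ to regroup this as $(h^{-\r}h^{\r^2})^{-\r^2}(h^{-1}mh)(h^{-\r}h^{\r^2})^{-\r}$, and then recognizes the result as the loop product $mJ_h.\vf(h)$ via Lemma \ref{gzt} and the multiplication rule (\ref{loop_mult}) (dually for $h^{\r^2}$). You instead exploit the factorizations $h^\r=h\,\vf(h)^{\r^2}$ and $h^{\r^2}=h\,(\vf(h)^{-1})^{\r}$ in $G$ (both of which check out, as does $\vf(h)^{-1}\in M$), and then simply compose $\x_h=J_h$ with the already-established values $\x_{m^{\r^2}}=R_m$ and $\x_{m^\r}=L_m$ from Lemma \ref{m_x}, using the homomorphism property from Lemma \ref{act_x}$(ii)$. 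The underlying group identity is the same, but your packaging is more modular: it reuses Lemma \ref{m_x} rather than re-deriving the conjugation-to-loop-product translation, at the cost of having to spot that $h^\r$ and $h^{\r^2}$ route through $M^{\r^2}$ and $M^{\r}$. The ``pedestrian alternative'' you sketch at the end is essentially the paper's own proof.
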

\begin{proof} For every $m\in M$, we have
\begin{align*}
&m\x_h=h^{-1}mh^\s = h^{-1}mh = mJ_h;\\
&m\x_{h^\r}=h^{-\r}m h^{\r^2} = (h^{-\r}h^{\r^2})^{-\r^2}h^{-1}mh(h^{-\r}h^{\r^2})^{-\r} = mJ_h.\vf(h);\\
&m\x_{h^{\r^2}}=h^{-\r^2}mh^\r = (h^{-\r}h^{\r^2})^\r h^{-1}mh(h^{-\r}h^{\r^2})^{\r^2} = \vf(h)^{-1}.mJ_h\\
\end{align*}
by Lemma \ref{gzt}, and the claim follows. \end{proof}

\begin{lem}\label{dxy} Let $M$ be a Moufang loop and let $x,y,m\in M$. Then
\begin{enumerate}
\item[$(i)$] $m^{-1}(mx\cdot y)=xm^{-1}\cdot my=(x\cdot ym^{-1})\cdot m$;
\item[$(ii)$] $R_{x,y}=L_{x^{-1},y^{-1}}=R_xL_y^{-1}R_x^{-1}L_y$;
\item[$(iii)$] $x^{-1}\cdot(xy^{-1}\cdot m)y=y^{-1}x^{-1}\cdot(xm \cdot y)=(y^{-1}\cdot mx^{-1})\cdot xy = y^{-1}(m\cdot yx^{-1})\cdot x$;
\item[$(iv)$] The operator $D_{x,y}:m\mapsto x^{-1}\cdot(xy^{-1}\cdot m)y$ is a pseudoautomorphism of M with companion $y^{-1}xy^{-2}x^{-1}$;
\item[$(v)$] In $\PsAut(M)$, we have
$$(D_{x,y},y^{-1}xy^{-2}x^{-1})=(L_{x,y^{-1}},\llb x^{-1},y\rrb)\,(T_y,y^{-3})\,(L_{y,x},\llb y^{-1},x^{-1}\rrb).$$
\end{enumerate}
\end{lem}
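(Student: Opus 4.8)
The plan is to treat the five parts in turn, deriving the later ones from the earlier ones and reserving the group-with-triality machinery for the pseudoautomorphism statements (iv)--(v). For (i) I would argue entirely inside $M$: the three expressions are three bracketings of a single product, and passing from one to the next is an instance of a Moufang identity combined with the inverse property $m^{-1}(mu)=u=(um)m^{-1}$. Part (ii) is the classical identity $R_{x,y}=L_{x^{-1},y^{-1}}$ (see Bruck), while the third form $R_xL_y^{-1}R_x^{-1}L_y$ is checked by evaluating all operators on an arbitrary element and reducing with (i). Part (iii) is then pure substitution: the four displayed expressions are the value $mD_{x,y}$ read through the three equalities of (i) with the roles of the variables suitably permuted, so (iii) needs no new idea once (i) is available.

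The substantive statements are (iv) and (v), and here I would pass to a group $G$ with triality $S=\la\r,\s\ra$, writing $M=\M(G)$ and $H=C_G(\s)$. Reading the definition $mD_{x,y}=x^{-1}\cdot(xy^{-1}\cdot m)y$ as a composite of loop translations gives $D_{x,y}=L_{xy^{-1}}R_yL_{x^{-1}}$, so by Lemma \ref{m_x} and the homomorphism property of $\x$ (Lemma \ref{act_x}) one has $D_{x,y}=\x_g$ with $g=(xy^{-1})^{\r}y^{\r^2}(x^{-1})^{\r}$. Since $D_{x,y}$ fixes the identity, $1\x_g=g^{-1}g^\s=1$, whence $g\in H$ and $\x_g=J_g$. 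By Lemma \ref{gzt}$(vi)$ this already shows $D_{x,y}$ is a pseudoautomorphism with companion $\vf(g)$; it then remains to simplify $\vf(g)=g^{-\r}g^{\r^2}$, using the commuting relations of Lemma \ref{gzt}$(i)$ and the triality identity $mm^\r m^{\r^2}=1$ for $m\in M$, to the stated value $y^{-1}xy^{-2}x^{-1}$. This last simplification is the main computational obstacle.

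For (v) I would identify each factor on the right as a conjugation. By (ii), $L_{x,y^{-1}}=R_{x^{-1},y}$ and $L_{y,x}=R_{y^{-1},x^{-1}}$, so Lemma \ref{gzt}$(vii),(viii)$ give $L_{x,y^{-1}}=J_{k_1}$, $T_y=J_{\vf(y)}$, $L_{y,x}=J_{k_3}$, with $k_1=[x^{-\r},y^{-\r^2}]$ and $k_3=[y^{-\r},x^{\r^2}]$, together with the companions $\llb x^{-1},y\rrb$, $y^{-3}$, $\llb y^{-1},x^{-1}\rrb$. Hence the operator part of the right-hand product is $J_{k_1}J_{\vf(y)}J_{k_3}=J_{w}$ with $w=k_1\vf(y)k_3$, and a short calculation collapsing the $y$- and $x$-blocks (again via Lemma \ref{gzt}$(i)$ and $mm^\r m^{\r^2}=1$) yields $w=g^\s$, which equals $g$ because $g\in H$. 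Thus the operator of the product is $J_g=D_{x,y}$.

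Finally, the companion of the right-hand product is obtained from the rule $(A,a)(B,b)=(AB,aB\cdot b)$ applied twice to the three explicit pairs; one checks that the result equals $y^{-1}xy^{-2}x^{-1}$, in agreement with the companion $\vf(g)$ found in (iv), which confirms the asserted equality of pairs in $\PsAut(M)$. The only delicate points are the two bracket-collapsing computations in $G$ and the nonassociative companion bookkeeping, all of which are routine once Lemma \ref{gzt} is in hand.
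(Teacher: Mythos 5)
Your treatment of (i)--(ii) matches the paper's, but for (iv)--(v) you take a genuinely different route. The paper never leaves the loop: it proves the operator identity $D_{x,y}=L_{x,y^{-1}}T_yL_{y,x}$ by writing $D_{x,y}=L_xR_yL_{xy}^{-1}$ (from the second expression in (iii)), expanding the right-hand side via (ii), and reducing everything to the single translation identity $R_xR_yL_yL_x=L_yR_xL_xR_y$, which is a Moufang identity in disguise; the companion is then computed inside the group $\la x,y\ra$ using diassociativity, and (iv) is obtained as a corollary of (v) because a product of pseudoautomorphisms is a pseudoautomorphism. You instead lift to the triality group: $D_{x,y}=L_{xy^{-1}}R_yL_{x^{-1}}=\x_g$ with $g=(xy^{-1})^\r y^{\r^2}x^{-\r}$, observe $g\in H$ since $D_{x,y}$ fixes $1$, and identify the three right-hand factors as $J_{k_1},J_{\vf(y)},J_{k_3}$ via Lemma \ref{gzt}$(vii),(viii)$, reducing the operator identity to the group equation $k_1\vf(y)k_3=g$. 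This does work: the product collapses because $y^{-\r^2}y^{-\r}y^{\r^2}y^{\r}=1$, and both sides lie in $H$, so it suffices to check they agree up to $\s$. What your approach buys is structural transparency -- the pseudoautomorphism property of $D_{x,y}$ and its companion $\vf(g)$ come for free from Lemma \ref{gzt}$(vi)$ -- at the cost of unpacking loop elements as words in $G$; the paper's version is self-contained loop theory. Two cautions. First, your claim that (iii) is ``pure substitution'' into (i) is an overstatement: the paper needs fresh Moufang-identity manipulations there (e.g.\ $xy^{-1}\cdot m=x(y^{-1}x^{-1}\cdot(xm\cdot y))\cdot y^{-1}$), and your operator decomposition of $D_{x,y}$ silently relies on the equalities of (iii), so you cannot skimp on it. Second, your plan for (iv) is redundant and slightly risky: companions are not unique, so computing $\vf(g)$ directly and hoping it equals $y^{-1}xy^{-2}x^{-1}$ is exactly the hard calculation you defer; it is cleaner to prove (v) first (where the companion falls out of the $\PsAut$ product rule exactly as in the paper) and deduce (iv), which also renders your ``main computational obstacle'' unnecessary.
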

\begin{proof} $(i)$ The first equality holds, since the left Moufang identity implies
$$
m\cdot(xm^{-1}\cdot my)=mxm^{-1}m\cdot y=mx\cdot y.
$$ The second equality is dual to the first
one (i.e. follows by inversion).

$(ii)$ The first equality is known \cite[Lemma VII.5.4]{bru}. Applying $(i)$, we have
$$
mR_{x,y}=(mx\cdot y)\cdot y^{-1}x^{-1}=y\cdot(y^{-1}\cdot mx)x^{-1}=mR_xL_{y^{-1}}R_{x^{-1}}L_y
$$
$(iii)$ In order to show the first equality, it suffices to prove that
$xy^{-1}\cdot m = x(y^{-1}x^{-1}\cdot(xm\cdot y))\cdot y^{-1}$. Using the Moufang identities, the right-hand side can be rewritten as
\begin{align*}
&x(y^{-1}x^{-1}\cdot (x\cdot x^{-1}(xm\cdot y)))\cdot y^{-1}=(xy^{-1}\cdot x^{-1}(xm\cdot y))\cdot y^{-1}\\
=&\, x(y^{-1}x^{-1}\cdot (xm\cdot y) y^{-1})=xy^{-1}x^{-1}x\cdot m = xy^{-1}\cdot m.
\end{align*}
The third equality is dual to the first one. In order to show the second one, it suffices to prove that
$xm\cdot y = xy\cdot (y^{-1}\cdot mx^{-1})\cdot xy$. The right-hand side equals
$x(mx^{-1}\cdot xy) = xmx^{-1}x\cdot y = xm\cdot y$.

$(iv)$ follows from $(v)$.

$(v)$ We first show that $D_{x,y}=L_{x,y^{-1}}T_yL_{y,x}$. By the second expression in $(iii)$,
we have $D_{x,y}=L_xR_yL_{xy}^{-1}$. On the other hand, $(ii)$ implies
$$
L_{x,y^{-1}}T_yL_{y,x}=(R_x^{-1}L_y^{-1}R_xL_y)L_{y^{-1}}R_y(L_yL_xL_{xy}^{-1})
=R_x^{-1}L_y^{-1}R_xR_yL_yL_xL_{xy}^{-1}.
$$
Hence, it remains to show that $R_x^{-1}L_y^{-1}R_xR_yL_yL_x=L_xR_y$. This follows from
$R_xR_yL_yL_x=L_yR_xL_xR_y$, which in turn follows by
$$
x(y\cdot mx\cdot y) = x(ym\cdot xy) = (x\cdot ym \cdot x)y
$$
for every $m$ due the Moufang identities.

For the companion, we have

$$
(\llb x^{-1},y \rrb T_y\cdot y^{-3})L_{y,x}\cdot \llb y^{-1},x^{-1}\rrb=
(y^{-1}\cdot xy^{-1}x^{-1}y\cdot y)y^{-3}\cdot yxy^{-1}x^{-1}=y^{-1}xy^{-2}x^{-1},
$$
where we have used the fact that $L_{x,y}$ acts as the identity on the
subgroup $\langle x,y \rangle$ due to diassociativity.

\end{proof}

\section{A multiplication formula}

\begin{lem}\label{gen_m} Let $G$ be a group with triality $S$ and let $m,n,u,w\in \M(G)$. Then
\be\label{munw}
(m.u).(n.w)=(m.n).x,
\ee
where
\be\label{xx}
x=u^{-\rho n ^{-\rho}m^{\rho^2}}w^{[n^{\rho^2},m^{-\rho}]}u^{-\rho^2n^{\rho^2}m^{-\rho}}\in \M(G);
\ee
and
\be\label{inv}
(m.u)^{-1}=m^{-1}.y, \quad\text{where}\quad y=u^{\r m^{-1}}u^{\r^2m}.
\ee
\end{lem}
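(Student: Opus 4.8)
The plan is to work entirely inside $G$, expanding every loop product through \eqref{loop_mult} and using nothing beyond $\rho^3=1$, the automorphism property of $\rho,\sigma$, and Lemma~\ref{gzt}. For \eqref{munw} I would first set $a=m.u$, $b=n.w$, $c=m.n$, all lying in $M=\M(G)$. Since $L_c$ is a bijection of $M$, there is a unique $x\in M$ with $c.x=a.b$, and solving $c^{-\rho}xc^{-\rho^2}=a.b$ in $G$ gives $x=c^{\rho}(a.b)c^{\rho^2}$; this both secures $x\in M$ and reduces the lemma to a bare identity in $G$.

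Next I would compute the ingredients explicitly. From $a=m^{-\rho}um^{-\rho^2}$ one gets $a^{-\rho}=mu^{-\rho}m^{\rho^2}$ and $a^{-\rho^2}=m^{\rho}u^{-\rho^2}m$, and similarly $c^{\rho}=m^{-\rho^2}n^{\rho}m^{-1}$, $c^{\rho^2}=m^{-1}n^{\rho^2}m^{-\rho}$. Substituting and cancelling the inner $m^{\pm1}$ factors yields $x=m^{-\rho^2}n^{\rho}u^{-\rho}m^{\rho^2}\,n^{-\rho}wn^{-\rho^2}\,m^{\rho}u^{-\rho^2}n^{\rho^2}m^{-\rho}$.

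The decisive step is to match this with \eqref{xx}. Writing $k=[n^{\rho^2},m^{-\rho}]$ and splitting $x$ around the central $w$, the part to the left of $w$ equals the first factor of \eqref{xx} times $k^{-1}$ exactly when $k=[n^{-\rho},m^{\rho^2}]$, while the part to the right of $w$ equals $k$ times the third factor of \eqref{xx} exactly when $k=[n^{\rho^2},m^{-\rho}]$. These two requirements are reconciled by Lemma~\ref{gzt}$(iii)$, which asserts precisely $[n^{\rho^2},m^{-\rho}]=[n^{-\rho},m^{\rho^2}]\in H$; thus the single element $k$ conjugating $w$ absorbs the commutator discrepancy between the two orderings. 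I expect this identification to be the main obstacle, since it is pure index bookkeeping with $\rho$-twists in which an exponent or sign slip is easy, and since it crucially exploits the two different presentations of the same commutator.

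For \eqref{inv} I would avoid identifying $(m.u)^{-1}$ beforehand and instead impose $(m.u).(m^{-1}.y)=1$. With $a=m.u$, the relation $a^{-\rho}(m^{-1}.y)a^{-\rho^2}=1$ forces $m^{-1}.y=a^{\rho}a^{\rho^2}$, that is $m^{\rho}ym^{\rho^2}=a^{\rho}a^{\rho^2}$. Computing $a^{\rho}a^{\rho^2}=(m^{-\rho^2}u^{\rho}m^{-1})(m^{-1}u^{\rho^2}m^{-\rho})=m^{-\rho^2}u^{\rho}m^{-2}u^{\rho^2}m^{-\rho}$ and isolating $y$ gives $y=m^{-\rho}m^{-\rho^2}u^{\rho}m^{-2}u^{\rho^2}m^{-\rho}m^{-\rho^2}$. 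Finally I would collapse the outer pairs with the triality relation $m\,m^{\rho}m^{\rho^2}=1$: together with Lemma~\ref{gzt}$(i)$ this gives $m^{-\rho}m^{-\rho^2}=(m^{\rho^2}m^{\rho})^{-1}=m$, producing $y=mu^{\rho}m^{-2}u^{\rho^2}m=u^{\rho m^{-1}}u^{\rho^2 m}$. The membership $y\in M$ is automatic, as $y=(m.u)^{-1}L_{m^{-1}}^{-1}$ with $(m.u)^{-1}\in M$ and $L_{m^{-1}}^{-1}$ mapping $M$ onto $M$.
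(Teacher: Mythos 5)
Your proposal is correct and follows essentially the same route as the paper: expand everything in $G$ via \eqref{loop_mult}, isolate $x$ by conjugating with $(m.n)^{\pm\rho^{\,i}}$, and absorb the two commutators flanking $w$ into a single conjugation using Lemma~\ref{gzt}$(iii)$, which is exactly the paper's key step. The only (harmless) deviations are cosmetic: you get $x\in\M(G)$ from bijectivity of $L_{m.n}$ on $\M(G)$ where the paper invokes Lemma~\ref{act_x} with $x=g^{-1}w^hg^{\sigma}$, and you derive \eqref{inv} directly rather than by specializing \eqref{xx} at $n=m^{-1}$.
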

\begin{proof}
The left-hand side of (\ref{munw}) can be expanded using (\ref{loop_mult}) and Lemma \ref{gzt}$(iii)$ as follows
\begin{align*}
&m^{-\r}um^{-\r^2}.n^{-\r}wn^{-\r^2}=(mu^{-\r}m^{\r^2})n^{-\r}wn^{-\r^2}(m^\r u^{-\r^2}m)=(mn^{-\r}m^{\r^2})\, \times\\
\times&\,m^{-\r^2}n^\r u^{-\r}n^{-\r}m^{\r^2}[m^{\r^2},n^{-\r}]w[n^{\r^2},m^{-\r}]m^\r n^{-\r^2}
u^{-\r^2}n^{\r^2}m^{-\r}(m^\r n^{-\r^2}m)\\
=&(m^{-\r}nm^{-\r^2})^{-\r}u^{-\r n^\r m^{\r^2}}w^{[n^{\r^2},m^{-\r}]}u^{-\r^2 n^{\r^2}m^{\r}}(m^{-\r}nm^{-\r^2})^{-\r^2}=(m.n).x
\end{align*}
Observe that $x=g^{-1}w^hg^\sigma$, where $h=[n^{\rho^2},m^{-\rho}]\in C_G(\s)$ and $g=u^{\r n^{-\r}m^{\r^2}}$. Hence, $x\in \M(G)$ by Lemma \ref{act_x}.

The inversion formula (\ref{inv}) follows from (\ref{xx}) by setting $n=m^{-1}$ and using Lemma \ref{gzt}$(i)$.
\end{proof}

This result can be applied in the following situation.

\begin{lem}\label{m_f} Let $K,V$ be $S$-subgroups of a group $G$ with triality.
Suppose that $V$ is normal in $G$ and $G=KV$. Then (\ref{munw}) is a multiplication formula
in the corresponding Moufang loop $M=\M(G)$ with respect to its decomposition $M=\M(K)\M(V)$,
where $\M(V)$ is a normal subloop of $M$ and $m,n\in \M(K)$, $u,w\in \M(V)$.
\end{lem}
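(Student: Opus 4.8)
The plan is to establish the three assertions of the statement in turn, noting first that formula (\ref{munw}) itself requires no new work: it holds for arbitrary $m,n,u,w\in M$ by Lemma \ref{gen_m}. Hence the only issue is that the decomposition $M=\M(K)\M(V)$ makes sense and that (\ref{munw}) respects it, i.e.\ that $m.n\in\M(K)$ (immediate, since $\M(K)$ is a subloop) and that the element $x$ of (\ref{xx}) lies in $\M(V)$ whenever $u,w\in\M(V)$. That $\M(K)$ and $\M(V)$ are subloops of $M$ is given in the Preliminaries, as $K,V$ are $S$-subgroups.

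For normality, I would check that $\M(V)$ is invariant under the inner mapping group of $M$, which for a Moufang loop is generated by the operators $T_m$, $R_{m,n}$, $L_{m,n}$ (see \cite{bru}). By Lemmas \ref{gzt}(vi)--(viii), \ref{h_x} and \ref{dxy}(ii), each of these coincides with a conjugation $J_h=\x_h$ for a suitable $h\in H=C_G(\s)$ (namely $h=\vf(m)$ for $T_m$, $h=[m^\r,n^{-\r^2}]$ for $R_{m,n}$, and correspondingly for $L_{m,n}=R_{m^{-1},n^{-1}}$). The point is that such conjugations preserve $\M(V)$: writing $w=a^{-1}a^\s$ with $a\in V$ and using $h^\s=h$, we get $w^h=(a^h)^{-1}(a^h)^\s$, where $a^h\in V$ because $V\trianglelefteq G$; hence $w^h\in\M(V)$. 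Since the same applies to $h^{-1}$, each inner generator restricts to a bijection of $\M(V)$, so $\M(V)\trianglelefteq M$.

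For the decomposition I would pass to the quotient. Since $V$ is $S$-invariant and normal, the projection $\pi\colon G\to G/V$ is a morphism of groups with triality and induces a surjective loop homomorphism $\bar\pi\colon M\to\M(G/V)$ with kernel $\M(V)$ (the correspondence of \cite{gz_tri}; this is exactly where the identity $M\cap V=\M(V)$ enters). Because $G=KV$, we have $\pi(K)=G/V$, so $\bar\pi$ already maps $\M(K)$ onto $\M(G/V)$. Thus every $z\in M$ admits $m\in\M(K)$ with $\bar\pi(m)=\bar\pi(z)$, whence $m\backslash z\in\ker\bar\pi=\M(V)$ and $z=m.(m\backslash z)\in\M(K)\M(V)$.

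Finally, the crux is to show $x\in\M(V)$ for $u,w\in\M(V)$. Here I would use the factorization recorded in the proof of Lemma \ref{gen_m}, $x=g^{-1}w^hg^\s$ with $h=[n^{\r^2},m^{-\r}]\in H$ and $g=u^{\r n^{-\r}m^{\r^2}}$. Writing $w=a^{-1}a^\s$ with $a\in V$, the computation above gives $w^h=(a^h)^{-1}(a^h)^\s$ with $a^h\in V$; moreover $u\in\M(V)\subseteq V$ and $V^\r=V$ give $u^\r\in V$, so $g$, a $G$-conjugate of $u^\r$, also lies in $V$ by normality. Therefore $x=(a^hg)^{-1}(a^hg)^\s$ with $a^hg\in V$, i.e.\ $x\in\M(V)$. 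Together with $m.n\in\M(K)$, this shows that (\ref{munw}) is a multiplication formula for $M=\M(K)\M(V)$. I expect the decomposition step to be the main obstacle, since it is the only point relying on the nontrivial correspondence fact $\ker\bar\pi=\M(V)$ (equivalently $M\cap V=\M(V)$), which fails for a general $S$-invariant subgroup and must be drawn from the triality correspondence; the normality and the membership $x\in\M(V)$ are then direct consequences of $V\trianglelefteq G$ and $h\in C_G(\s)$.
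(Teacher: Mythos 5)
Your proposal is correct, but it reaches the key conclusion $x\in\M(V)$ by a different and more self-contained route than the paper. The paper's entire proof consists of one observation: apply the natural homomorphism $M\to M/\M(V)$ to both sides of (\ref{munw}); the images of $u,w$ vanish, so the image of $x$ is trivial and $x\in\M(V)$. This is shorter but presupposes that $\M(V)$ is a normal subloop with the expected quotient (the paper takes this, together with the decomposition $M=\M(K)\M(V)$, from the standard triality correspondence and does not argue it). You instead prove $x\in\M(V)$ directly from the factorization $x=g^{-1}w^hg^\s$ recorded in the proof of Lemma \ref{gen_m}: since $u\in V$, $V^\r=V$ and $V\trianglelefteq G$ give $g\in V$, and $h\in C_G(\s)$ gives $w^h=(a^h)^{-1}(a^h)^\s$ with $a^h\in V$, whence $x=(a^hg)^{-1}(a^hg)^\s\in\M(V)$. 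This computation is valid and has the virtue of not invoking normality of $\M(V)$ at all, whereas the paper's quotient argument does. You also supply explicit verifications of the normality of $\M(V)$ (via the generators $T_m$, $R_{m,n}$, $L_{m,n}$ of the inner mapping group realized as conjugations $J_h$, $h\in H$, which preserve $\M(V)$ because $V\trianglelefteq G$ and $h^\s=h$) and of the decomposition $M=\M(K)\M(V)$ (via the induced surjection $M\to\M(G/V)$ with kernel $\M(V)$); the paper states both as part of the lemma without proof. The one place where you correctly flag reliance on an external fact is the identification $\ker\bar\pi=M\cap V=\M(V)$ for the normal $S$-subgroup $V$, which indeed must be drawn from \cite{gz_tri,dor} --- but the paper's own quotient argument rests on the same correspondence, so you are no worse off there.
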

\begin{proof} We only need to show that the element $x$ from (\ref{xx}) lies in $\M(V)$. This
follows from applying the natural homomorphism $M\to M/\M(V)$ to both sides of (\ref{munw}).
\end{proof}

Lemma \ref{m_f} can be used to obtain a multiplication formula for any Moufang loop $L$
with respect to any decomposition $L=MN$, where $M,N$ are subloops of $L$ with $N$ normal, because
there always exists a group with triality $G$ as stated in the lemma such that $\M(G)=L$,
$\M(K)=M$ and $\M(N)=V$. In fact, any group $G$ with triality such that $[G,S]=G$ and
$\M(G)=L$ will have this property, see \cite{dor}.

The case where $V$ is abelian deserves special attention.

\begin{lem}\label{m_f_ab} If, under the assumptions of Lemma \ref{m_f},
$V$ is an abelian subgroup of $G$ then
the multiplication and inversion formulas in $\M(G)$ take the form
\be\label{xx_ab}
\ba{r@{\,}l}
(m.u).(n.w)&=(m.n).x, \qquad x=uD_{m,n}+wL_{n,m}; \\
(m.u)^{-1}&=m^{-1}.y, \qquad y=-uT_{m}^{-1}
\ea\ee
for all $m,n\in \M(K)$, $u,w\in \M(V)$.
\end{lem}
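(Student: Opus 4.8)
The plan is to specialize the general formula \eqref{xx} from Lemma \ref{gen_m} to the case where $V=\M(V)$ sits inside an abelian $S$-subgroup of $G$, and to translate the multiplicative, conjugation-heavy expressions into additive ones. Since $V$ is abelian, I would switch to additive notation for the subgroup of $G$ containing $u,w$ and all their $\rho$-conjugates; every conjugation $v\mapsto v^g$ by an element $g\in G$ then becomes the action of a linear operator, and in particular the $\chi$-action of Lemma \ref{act_x} and Lemma \ref{m_x} becomes linear. The key observation is that when $V$ is abelian the commutator-conjugation term $w^{[n^{\rho^2},m^{-\rho}]}$ collapses: by Lemma \ref{gzt}$(iii)$ the element $h=[n^{\rho^2},m^{-\rho}]$ lies in $H=C_G(\sigma)$, and on an abelian $V$ the inner automorphism $J_h$ acts through the pseudoautomorphism machinery of Lemma \ref{h_x}. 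So the three conjugation factors in \eqref{xx} should reorganize into two separate contributions, one from $u$ and one from $w$, which is exactly the additive shape $x=uD_{m,n}+wL_{n,m}$ I am aiming for.

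The main steps, in order, are as follows. First I would identify the operator acting on $w$: in \eqref{xx} the factor is $w^{[n^{\rho^2},m^{-\rho}]}=w^h$ with $h=[n^{\rho^2},m^{-\rho}]$, and by Lemma \ref{gzt}$(vii)$ the pair $(J_h,\vf(h))$ equals $(R_{m,n},\llb m,n\rrb)$; since $V$ is abelian and $w\in\M(V)$, the $\chi$-action of $h$ on $w$ is just $J_h$, so this term contributes $wR_{m,n}$. But by Lemma \ref{dxy}$(ii)$ we have $R_{m,n}=L_{m^{-1},n^{-1}}$, and a short index check against the formula target shows this is precisely the operator written as $L_{n,m}$ in \eqref{xx_ab} (here one must track the order-of-arguments convention carefully). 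Second, I would treat the two $u$-factors $u^{-\rho n^{-\rho}m^{\rho^2}}$ and $u^{-\rho^2 n^{\rho^2}m^{-\rho}}$: each is a $\chi$-type action of a product of $\rho$- and $\rho^2$-conjugates, and using Lemma \ref{m_x} (which identifies $\chi_{m^\rho}=L_m$, $\chi_{m^{\rho^2}}=R_m$ on $M$) together with $\chi$ being a homomorphism (Lemma \ref{act_x}$(ii)$), these two factors combine into a single operator applied to $u$. On an abelian $V$ the additive sum of the two image terms is $u$ acted on by the composite operator, and I expect this composite to be exactly $D_{m,n}$, matching the definition $D_{x,y}=L_xR_yL_{xy}^{-1}$ recovered in the proof of Lemma \ref{dxy}$(v)$.

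Third, the inversion formula follows by the same specialization of \eqref{inv}: on abelian $V$ the two conjugates $u^{\rho m^{-1}}$ and $u^{\rho^2 m}$ add, and the resulting operator on $u$ is $-T_m^{-1}$; I would verify this either directly from $y=u^{\rho m^{-1}}u^{\rho^2 m}$ by rewriting through Lemma \ref{m_x} and the triality relations, or by taking the defining relation $(m.u)(m.u)^{-1}=1$ inside the abelian quotient and solving the resulting linear equation for $y$, using $D_{m,m^{-1}}$ and the identity relating $D$, $T$, and $L$ from Lemma \ref{dxy}$(v)$. The hard part will be the bookkeeping in the second step: correctly resolving which composite of $L$, $R$ operators the product of conjugations yields, keeping the left/right and argument-order conventions of \eqref{ops} consistent with the $\chi$-homomorphism direction, so that the two $u$-terms genuinely assemble into $D_{m,n}$ rather than some conjugate or inverse of it. Once the operator identifications $w^h\mapsto wL_{n,m}$ and the $u$-pair $\mapsto uD_{m,n}$ are pinned down, passing from the multiplicative $\M(G)$-formula to the additive $\M(V)$-formula is immediate because abelianness turns the product in \eqref{xx} into a sum.
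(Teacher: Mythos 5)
Your outline follows the paper's proof in overall shape --- specialize (\ref{xx}), identify the $w$-term as $wL_{n,m}$ via Lemma \ref{gzt}$(iii)$,$(vii)$ and Lemma \ref{dxy}$(ii)$, and use abelianness of $V$ to turn the product into a sum --- but the step you yourself flag as ``the hard part'' is a genuine gap, and the tool you propose for it does not apply. The two $u$-factors in (\ref{xx}) are $u^{-\r n^{-\r}m^{\r^2}}$ and $u^{-\r^2 n^{\r^2}m^{-\r}}$, i.e.\ ordinary conjugations of $u^{-\r}$ and $u^{-\r^2}$ by products of $\r$-twisted elements; they are \emph{not} $\x$-actions $g^{-1}ug^{\s}$ of single elements of $M^{\r}$ or $M^{\r^2}$, so Lemma \ref{m_x} does not convert them into compositions of $L$'s and $R$'s, and you are left ``expecting'' rather than proving that the composite is $D_{m,n}$. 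The paper sidesteps the computation entirely with a short trick: set $w=1$ in the already-established formula (\ref{munw}). Then $(m.u).n=(m.n).x$ forces $x=uL_mR_nL_{mn}^{-1}=uD_{m,n}$ by the expression $D_{x,y}=L_xR_yL_{xy}^{-1}$ from the proof of Lemma \ref{dxy}$(v)$, so the identity
$$
uD_{m,n}=u^{-\r n^{-\r}m^{\r^2}}\,u^{-\r^2 n^{\r^2}m^{-\r}}
$$
comes for free (indeed without assuming $V$ abelian). Abelianness is then needed only to permute the last two factors of (\ref{xx}) so that the two $u$-factors become adjacent and this identity applies.

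Two smaller points. First, your identification of the $w$-operator is off by an inversion: Lemma \ref{gzt}$(vii)$ must be applied with $m,n$ replaced by $n^{-1},m^{-1}$, since $h=[n^{\r^2},m^{-\r}]=[n^{-\r},m^{\r^2}]=[(n^{-1})^{\r},(m^{-1})^{-\r^2}]$ by Lemma \ref{gzt}$(iii)$; this gives $J_h=R_{n^{-1},m^{-1}}$, which equals $L_{n,m}$ by Lemma \ref{dxy}$(ii)$. Your chain $J_h=R_{m,n}=L_{m^{-1},n^{-1}}=L_{n,m}$ rests on $R_{m,n}=R_{n^{-1},m^{-1}}$, which is not a general identity, so the ``short index check'' you defer would not close as written. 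Second, for the inversion formula the paper uses neither (\ref{inv}) nor a linear equation: it simply writes $(m.u)^{-1}=u^{-1}.m^{-1}=m^{-1}.(u^{-1}T_m^{-1})$ and passes to additive notation; both of your proposed routes would also work but are longer.
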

\begin{proof} First, observe that $J_h=L_{n,m}$ for $h=[n^{\r^2},m^{-\r}]$ by Lemmas
\ref{gzt}$(iii,vii)$ and \ref{dxy}$(ii)$. In particular, $w^h=wL_{n,m}$.
Also, if we set $w=1$ in (\ref{munw}),
we have $x=(m.n)^{-1}.((m.u).n)=uD_{m,n}$. Hence, with $w=1$ the expression (\ref{xx}) gives
\be\label{x_dmn}
uD_{m,n}=u^{-\rho n ^{-\rho}m^{\rho^2}}u^{-\rho^2n^{\rho^2}m^{-\rho}}.
\ee
Since $(m.u)^{-1}=u^{-1}.m^{-1}=m^{-1}.u^{-1}T_m^{-1}$, we have the expression for $y$.

Assume now that $V$ is abelian. Since $V$ is a normal $S$-subgroup, all three factors on the right-hand
side of (\ref{xx}) belong to $V$. Permuting the last two factors and using (\ref{x_dmn}),
we obtain (\ref{xx_ab}) in the additive notation.
\end{proof}

\begin{lem}\label{ass} Suppose in a Moufang loop $L=MU$ with a subloop $M$ and a normal abelian subgroup $U$, the multiplication is given by (\ref{xx_ab}) for all $m,n\in M$, $u,w\in U$.
Then $(l,u,w)=1$ for all $l\in L$, $u,w\in U$.
\end{lem}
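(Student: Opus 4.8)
The plan is to work directly with the explicit formula (\ref{xx_ab}) and to isolate the single fact that does all the work: right-multiplication by an element of $U$ acts as pure addition on the $U$-coordinate. Once this ``absorption'' property is established, the vanishing of the associator is just associativity in the abelian group $U$. First I would fix a representation $l=m.v$ with $m\in M$, $v\in U$, and note that the elements $u,w\in U$ are parametrized as $1.u$ and $1.w$, where $1$ is the identity of $M$ (so their $M$-component is trivial) and $U$ is written additively.

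The key step is the absorption identity: for all $m\in M$ and $v,t\in U$,
\[
(m.v).(1.t)=m.(v+t).
\]
Applying (\ref{xx_ab}) with first factor $m.v$ and second factor $1.t$ (so $n=1$ and the two $U$-components are $v,t$), the $M$-component is $m\cdot 1=m$ and the product equals $m.(vD_{m,1}+tL_{1,m})$; thus it suffices to check that both operators degenerate to the identity. For $D_{m,1}$ this follows at once from the definition in Lemma~\ref{dxy}$(iv)$: $aD_{m,1}=m^{-1}\cdot((m\cdot a)\cdot 1)=m^{-1}(m\cdot a)=a$ by the inverse property (consistently, its companion $y^{-1}xy^{-2}x^{-1}$ specializes to $1$ at $y=1$). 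For $L_{1,m}$ I would use the definition $L_{x,y}=L_xL_yL_{yx}^{-1}$ from (\ref{ops}): with $x=1$, $y=m$ one gets $L_{1,m}=L_1L_mL_m^{-1}=\mathrm{id}$.

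With the absorption identity in hand the conclusion is purely formal. Applying it twice gives
\[
(l.u).w=(m.(v+u)).(1.w)=m.\bigl((v+u)+w\bigr),
\]
while the same computation with $m=1$ (where also $D_{1,1}=L_{1,1}=\mathrm{id}$) yields $u.w=(1.u).(1.w)=1.(u+w)$, hence
\[
l.(u.w)=(m.v).(1.(u+w))=m.\bigl(v+(u+w)\bigr).
\]
Since $U$ is an abelian, in particular associative, group we have $(v+u)+w=v+(u+w)$, so $(l.u).w=l.(u.w)$ and therefore $(l,u,w)=(l.uw)^{-1}(lu.w)=1$.

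There is no serious obstacle here: the whole content is the absorption identity, i.e.\ the observation that the operators $D_{m,n}$ and $L_{n,m}$ controlling the $U$-coordinate collapse to the identity as soon as one of their arguments is trivial. Once this is seen, triviality of the associator reduces to associativity in $U$. The only points requiring care are the bookkeeping of the $(m,u)$-parametrization (with $M$ written multiplicatively and $U$ additively) and the subscript order in $L_{n,m}$, both of which are routine.
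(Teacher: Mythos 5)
Your proposal is correct and follows essentially the same route as the paper: both proofs rest on the absorption identity $mu\cdot w=m\cdot uw$ (which you justify by checking $D_{m,1}=L_{1,m}=\mathrm{id}$, a verification the paper leaves implicit) and then reduce the vanishing of $(l,u,w)$ to associativity of the abelian group $U$.
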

\begin{proof} We use multiplicative notation in $U$. Using (\ref{xx_ab}),
we see that $mu\cdot w = m\cdot uw$ for every $m\in M$. Hence, if $l=mv$ for $m\in M$, $v\in U$, we have
$lu\cdot w=(m\cdot vu)w=m(vu\cdot w)$ and $l\cdot uw=m(v\cdot uw)$. The claim follows, since $U$ is associative.
\end{proof}

Lemma \ref{ass} shows that not every Moufang loop $L$ with decomposition $L=MU$, where $U$ is normal abelian, has multiplication (\ref{xx_ab}). For example, the abelian-by-cyclic Moufang loops constructed in \cite{gz_abc} have this decomposition, but do not satisfy the conclusion of the lemma. The reason is that for those loops, in the corresponding group with triality, the normal $S$-subgroup $V$ such that $\M(V)=U$ is not abelian.  The following generalization of Lemma \ref{ass} holds.

\begin{lem} \label{ab_nor} Let $G$ be a group with triality $S$ and let $V$ be an abelian $S$-subgroup of $G$. Denote $U=\M(V)$, which is an abelian subgroup of $L=\M(G)$. Then we have $(L,U,U)=1$.
\end{lem}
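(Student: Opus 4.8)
The plan is to reduce the general statement $(L,U,U)=1$ to the special situation already analyzed in Lemma~\ref{ass}. The associator $(l,u,w)$ measures the failure of $l\cdot uw=lu\cdot w$, so I want to show that the multiplication in $L=\M(G)$, when restricted to products with two factors in $U$, behaves associatively. The obstacle is that Lemma~\ref{ass} assumes the multiplication is globally given by the formula (\ref{xx_ab}), which requires a \emph{global} decomposition $L=MU$ with $U$ normal abelian; here we are only given an abelian $S$-subgroup $V$ and have no such subloop $M$ at hand. So the first task is to manufacture an appropriate decomposition.

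First I would invoke the remark following Lemma~\ref{m_f}: we may replace $G$ by $[G,S]$ without changing $\M(G)=L$, so assume $G=[G,S]$. Since $V$ is an abelian $S$-subgroup and we want it normal, I would pass to the normal closure; concretely, set $V^{*}$ to be the $S$-invariant normal closure of $V$ in $G$. The difficulty is that $V^{*}$ need not remain abelian, so Lemma~\ref{m_f_ab} does not apply to it directly. To circumvent this, rather than normalizing $V$ I would instead localize the computation: fix $u,w\in U=\M(V)$ and an arbitrary $l\in L$, and aim to verify $(l,u,w)=1$ using only the multiplication formula (\ref{munw}) of Lemma~\ref{gen_m} together with the commutativity of $V$.

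The concrete computation I would carry out is the following. Using Lemma~\ref{gzt}(ix), the associator is $(l,u,w)=[k,l]^{l^{\r^2}}$ with $k=[u^{-\r},w^{\r^2}]$. Since $u,w\in\M(V)$ and $V$ is abelian, both $u^{-\r}$ and $w^{\r^2}$ lie in the abelian subgroups $V^{\r}$ and $V^{\r^2}$ respectively. The key step is to show $k=[u^{-\r},w^{\r^2}]=1$: by Lemma~\ref{gzt}(iii) this commutator equals $[w^{\r^2},u^{-\r}]^{-1}$ rearranged into $[u^{\r^2},w^{-\r}]$-type form lying in $H=C_G(\s)$, and I would exploit that $u,w$ generate inside the single abelian group $V$ and its conjugates $V^{\r},V^{\r^2}$ a configuration where the relevant commutator collapses. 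Once $k=1$, we get $[k,l]=1$ and hence $(l,u,w)=1$ for all $l$, giving $(L,U,U)=1$.

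The main obstacle I anticipate is precisely establishing $k=[u^{-\r},w^{\r^2}]=1$ from abelianness of $V$ alone, since $V^{\r}$ and $V^{\r^2}$ are \emph{different} subgroups of $G$ and abelianness of $V$ does not immediately force elements of $V^{\r}$ and $V^{\r^2}$ to commute. I expect the resolution to come from Lemma~\ref{gzt}(i), which says $m,m^{\r},m^{\r^2}$ pairwise commute for $m\in M$, combined with the fact that $u^{-\r}$ and $w^{\r^2}$ are built from elements of the \emph{same} abelian $V$; writing $u=a^{-1}a^{\s}$, $w=b^{-1}b^{\s}$ with $a,b\in V$ and using $[a,b]=1$ together with the triality relations on $\r,\s$ should let me commute the two factors past each other. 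If this direct commutator vanishing proves delicate, the fallback is to retreat to Lemma~\ref{ass}: embed the local data into an auxiliary group with triality in which $V$ \emph{is} normal and abelian (e.g. by replacing $G$ with the subgroup $\la V^{S}, l\text{-lift}\ra$ chosen small enough to keep $V$ abelian and normal), apply Lemma~\ref{m_f_ab} there to realize multiplication~(\ref{xx_ab}), and then conclude by Lemma~\ref{ass}.
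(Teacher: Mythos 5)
You have found the right key identity --- Lemma~\ref{gzt}$(ix)$, giving $(l,u,w)=[k,l]^{l^{\rho^2}}$ with $k=[u^{-\rho},w^{\rho^2}]$ --- which is exactly the route the paper takes. But the step you flag as the ``main obstacle,'' namely showing $k=1$, is not delicate at all, and the reason you got stuck is a misreading of the hypothesis: $V$ is an \emph{$S$-subgroup}, which by the paper's definition (see the paragraph before Lemma~\ref{gzt}) means $S$-\emph{invariant}. Hence $V^{\rho}=V^{\rho^2}=V$, so $u^{-\rho}$ and $w^{\rho^2}$ are not elements of two ``different'' subgroups --- they both lie in the single abelian group $V$, and therefore commute, giving $k=1$ immediately. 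Your attempted workaround via writing $u=a^{-1}a^{\sigma}$ and invoking Lemma~\ref{gzt}$(i)$ is unnecessary, and your fallback (building an auxiliary triality group where $V$ is normal, then applying Lemmas~\ref{m_f_ab} and~\ref{ass}) is both more machinery than needed and not obviously realizable: normality of $V$ is nowhere required for the associator computation, and passing to normal closures risks destroying abelianness, as you yourself note.

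The same misreading undermines your opening paragraph: there is no need to manufacture a decomposition $L=MU$, to replace $G$ by $[G,S]$, or to normalize $V$. The statement is purely local in $u,w$, and once you observe $S$-invariance of $V$ the whole proof is four lines. I would delete the first paragraph and the fallback entirely and keep only the middle computation, corrected as above.
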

\begin{proof} Let $l\in L$ and $u,v\in U$. Recall that $U\se V$. By Lemma \ref{gzt}$(ix)$, we have $(l,u,v)=[k,l]^{l^{\r^2}}$, where $k=[v^{-\r},u^{\r^2}]$. Since $V$ is $S$-invariant, we have $v^{-\r},u^{\r^2}\in V$. Since $V$ is abelian, we have $k=1$. Therefore, $(l,u,v)=1$ and the claim follows.
\end{proof}

In the next sections, we will show how the action of operators $D_{m,n}$, $L_{n,m}$, and $T_m$
that appear in the multiplication formula (\ref{xx_ab})
can be restored from the group action on modules or
linked with the inner multiplication of an alternative algebra on its additive group.

\section{Modules for wreathlike triality groups \label{mtg}}

Let $G$ be a group. It it known that $T=G\times G\times G$ is a group with triality $S=\la\r,\s\ra$
(which we call {\em wreathlike} following \cite{dor}) with respect to the natural action
$(g_1,g_2,g_3)^\r=(g_3,g_1,g_2)$, $(g_1,g_2,g_3)^\s=(g_2,g_1,g_3)$, whose corresponding Moufang loop is
isomorphic to $G$.

Let $R$ be a commutative associative unital ring and let $V$ be an $RG$-module which is free of rank $n$
as an $R$-module. The outer tensor product $W=V\#\, V\#\, V$ is an $RT$-module
\cite[Definition VII.43.1]{cr} which admits the natural action of $S$ by
$(v_1 \#\, v_2 \#\, v_3)^\r=v_3 \#\, v_1 \#\, v_2$,
$(v_1 \#\, v_2 \#\, v_3)^\s=v_2 \#\, v_1 \#\, v_3$
that is compatible with the action of $T$ on $W$ in the sense that $(w^t)^\tau=(w^\tau)^{(t^\tau)}$ for all
$w\in W$, $t\in T$, $\tau \in S$, and hence is extended to $A=T\rd W$. Our aim is to determine when $A$ is
a group with triality $S$.

\begin{lem}\label{dim} The group $A$ constructed above has triality $S$ if and only if $n\le 2$.
If $n=1$ then $\M(A)\cong G$.
\end{lem}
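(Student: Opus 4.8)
The plan is to determine exactly when the triality condition holds for $A = T \rd W$, where $T = G \times G \times G$ and $W = V \# V \# V$. Recall that by definition $A$ has triality $S$ precisely when $(a^{-1}a^\s)(a^{-1}a^\s)^\r(a^{-1}a^\s)^{\r^2} = 1$ for every $a \in A$. Since $T$ is already known to be a group with triality, the content lies entirely in how the module part $W$ interacts with the condition. I would write a general element as $a = (t, w)$ with $t \in T$, $w \in W$, and compute $a^{-1}a^\s$ using the semidirect product multiplication together with the compatible action $(w^t)^\tau = (w^\tau)^{(t^\tau)}$. The $T$-component of the triality product will automatically vanish by the triality of $T$, so the whole expression reduces to a single element of the abelian group $W$, which must be forced to be zero for all $a$.

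First I would reduce to checking the condition on elements of the form $a = (1, w)$ with $w \in W$, and separately track the interference from the $T$-part. Writing $w = v_1 \# v_2 \# v_3$ (extended additively to all of $W$), the action of $\s$ swaps the first two tensor slots and $\r$ cyclically permutes all three. Computing $w^{-1}w^\s$ in the additive group $W$ and then applying $\r$ and $\r^2$ and summing, I expect to obtain an explicit $R$-linear expression in the coordinates of $v_1, v_2, v_3$. The triality identity then becomes a system of $R$-linear identities that must hold for all choices of the $v_i$; this is where the rank $n$ enters, because the relevant obstruction will be a symmetric or antisymmetric tensor expression whose vanishing is automatic for small $n$ but fails once $n \ge 3$.

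The key computational step is to carry out the expansion of $(a^{-1}a^\s)(a^{-1}a^\s)^\r(a^{-1}a^\s)^{\r^2}$ in the semidirect product, being careful that the cross terms between the $T$-action and the $W$-action are handled by the compatibility relation. I would organize this so that the $W$-valued obstruction is expressed as a sum of terms of the shape $v_i \# v_j \# v_k$ over the three cyclic images, and then identify which combinations survive. The hard part will be pinning down the precise combinatorial cancellation that shows the obstruction vanishes identically for $n \le 2$ but is nonzero for $n \ge 3$: I expect the surviving obstruction to be detected by an alternating expression in three vectors (something like an exterior product $v_1 \wedge v_2 \wedge v_3$), which is forced to vanish in rank at most $2$ by dimension but admits nonzero values in rank $3$ and above. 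Exhibiting a specific $w$ with nonzero obstruction when $n \ge 3$ gives the reverse implication.

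Finally, the assertion $\M(A) \cong G$ when $n = 1$ follows from the structural description of $\M$ for wreathlike groups with triality: I would use that $\M(T) \cong G$ for $T = G \times G \times G$, and then argue that adjoining the one-dimensional module $W$ does not enlarge the Moufang loop, since the triality set $\{a^{-1}a^\s : a \in A\}$ projects onto $\M(T)$ with trivial fiber in the module direction once $n = 1$. Concretely, I would show that for $n = 1$ the $W$-component of every element $a^{-1}a^\s$ is determined by, and absorbed into, the $T$-component, so that the multiplication \eqref{loop_mult} on $\M(A)$ restricts to the one on $\M(T) \cong G$.
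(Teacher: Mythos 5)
Your overall strategy --- expand $(a^{-1}a^\s)(a^{-1}a^\s)^\r(a^{-1}a^\s)^{\r^2}$ in $A=T\rd W$, observe that the $T$-component vanishes because $T$ already has triality, and identify the surviving $W$-valued obstruction as an alternating tensor expression that vanishes exactly when $n\le 2$ --- is essentially the computation behind the paper's proof. The paper shortcuts the semidirect-product bookkeeping by invoking \cite[Lemma 4]{gz_abc}, which says that $A$ has triality $S$ iff $W$ has triality with respect to each twisted group $S_{(m)}=\la\r^2m\r^2,\s\ra$ for $m\in\M(T)$. Your identification of the basic obstruction as the alternation $e_{ijk}-e_{jik}+e_{kij}-e_{kji}+e_{jki}-e_{ikj}$ --- nonzero for distinct $i,j,k$ when $n\ge3$ by linear independence, identically zero when $n\le2$ because two of the three indices must coincide --- is exactly the paper's displayed identity; just phrase the vanishing as a statement about six distinct basis vectors rather than as $\Lambda^3V=0$, which is not quite the right invariant over a general commutative ring.

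The one genuine gap is the proposed ``reduction to elements $a=(1,w)$''. That reduction is false as stated: for $a=(t,w)$ one gets $a^{-1}a^\s=(m,\,w^\s-w^m)$ with $m=t^{-1}t^\s\in\M(T)$, and the $W$-component of the triality product is $u^{m^\r m^{\r^2}}+u^{\r m^{\r^2}}+u^{\r^2}$ with $u=w^\s-w^m$. This condition genuinely depends on $m$, equivalently on $g\in G$ --- that is where the coefficients $g_{ks}$ in the paper's condition (\ref{cond_tri}) come from --- and checking only $m=1$ yields a priori only a necessary condition, so it cannot by itself prove the sufficiency direction for $n\le 2$. You acknowledge the need to ``track the interference from the $T$-part'', but that tracking is precisely the content of the cited lemma and of the computation you have not carried out. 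It does work out in the end: the obstruction for general $g$ is the alternation of $e_i\#\,e_j\#\,(e_kg)$, hence an $R$-linear combination of the basic alternations, so the $g=1$ conditions over all index triples do imply the general ones --- but this must be computed, not assumed. For $n=1$ your conclusion is correct; the precise reason the fibre over $\M(T)$ is trivial is that $\s$, $\r$ and every $m\in\M(T)$ (though not every $t\in T$) act trivially on $Re_{111}$, so the $W$-component $w^\s-w^m$ of $a^{-1}a^\s$ is literally zero, whereas the paper argues by passing to $\M(A/W)$.
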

\begin{proof} Let $e_1,\ldots,e_n$ be an $R$-basis of $V$. For $t=(g_1,g_2,g_3)\in T$ we have
$t^{-1}t^\s=(g_1^{-1}g_2,g_2^{-1}g_1,1)$. Hence, $\M(T)=\{(g^{-1},g,1)\mid g\in G\}$.
By \cite[Lemma 4]{gz_abc}, $A$ has triality $S$ if and only if $W$ has triality $S_{(m)}$
for all $m\in \M(T)$, where $S_{(m)}=\la\r^2m\r^2,\s\ra$. Let $m=(g^{-1},g,1)$. Then
$\r^2m\r^2=\r(g,1,g^{-1})$ and $\r m^{-1}\r=\r^2(1,g,g^{-1})$. Hence, $A$ has triality if and only if,
for every basis element $e_{ijk}=e_i\#\, e_j \#\, e_k$ of $W$ and every $g\in G$,
$$
e_{ijk}^{(1-\s)(1+\r(g,1,g^{-1})+\r^2(1,g,g^{-1}))}=0.
$$
Expanding and acting on both sides by $(1,1,g)$, we have
$$
(e_{ijk} - e_{jik} )^{(1,1,g)}+(e_{kij} - e_{kji} )^{(g,1,1)}+(e_{jki} - e_{ikj} )^{(1,g,1)}=0.
$$
Let $e_kg=\sum_s g_{ks}e_s$, where $g_{ks}\in R$. Then the condition is rewritten as
\be\label{cond_tri}
\sum_{s=1}^n g_{ks}(e_{ijs}-e_{jis}+e_{sij}-e_{sji}+e_{jsi}-e_{isj})=0
\ee
which must hold for all $g\in G$, $1\le i,j,k\le n$.  In particular, setting $g=1$ gives
$$
e_{ijk}-e_{jik}+e_{kij}-e_{kji}+e_{jki}-e_{ikj}=0.
$$
This does not hold, say, for $(i,j,k)=(1,2,3)$, if $n\ge 3$, since the basis elements are linearly
independent. However, if $n\le 2$ then (\ref{cond_tri}) is satisfied, since at least two of $i,j,s$ will coincide.

In the case $n=1$, the normal subgroup $W$ of $A$ is spanned by $e_{111}$ on which $S$ acts trivially.
Hence $\M(A)\cong \M(A/W) \cong \M(T) \cong G$.
\end{proof}

The case $n=2$ is of main interest and we consider it now in more detail.

\begin{thm}\label{gd} Let $G$ be a subgroup of $\GL_2(R)$.
Let $V$ be the free $R$-module of rank $2$ with the natural
action ``$\,\circ$'' of $G$. Denote by $G\rd V$ the set of pairs $(g,u)$ for $g\in G$, $u\in V$. Then
with respect to the operation
\be\label{m_g}
(g,u)\cdot(h,w)=(gh,u\circ (\det h) gh^{-2}g^{-1} + w\circ [h^{-1},g^{-1}])
\ee
$G\rd V$ becomes a Moufang loop with identity $(1,0)$ and inversion
$$
(g,u)^{-1}=(g^{-1},-u\circ (\det g)^{-1}g^2)
$$
Moreover, this Moufang loop is isomorphic to $\M(A)$, where $A=T\rd W$ is the triality group constructed
above with respect to the $RG$-module $V$.
\end{thm}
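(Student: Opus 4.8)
\emph{Plan.} The plan is to deduce everything from the identification of $G\rd V$ with the Moufang loop $\M(A)$, after which the loop axioms, the identity $(1,0)$, and the inversion formula come for free. Since $n=2$, Lemma~\ref{dim} guarantees that $A=T\rd W$ is a group with triality $S$, so $\M(A)$ is a Moufang loop. The abelian normal $S$-subgroup $W\nor A$ together with the decomposition $A=TW$ puts us exactly in the setting of Lemma~\ref{m_f_ab}: the multiplication in $\M(A)$ relative to $\M(A)=\M(T)\M(W)$ is given by (\ref{xx_ab}), with operators $D_{m,n}$, $L_{n,m}$, $T_m$ acting on the abelian group $\M(W)$. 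So it remains to fix concrete $R$-module identifications $\M(T)\cong G$ and $\M(W)\cong V$ and to compute these three operators as explicit matrices, matching them to the coefficients in (\ref{m_g}).

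First I would pin down the two identifications. A direct computation with (\ref{loop_mult}) shows that $\M(T)=\{(g^{-1},g,1)\mid g\in G\}$ multiplies by $(a^{-1},a,1).(b^{-1},b,1)=((ab)^{-1},ab,1)$, giving the loop isomorphism $\M(T)\cong G$, $(g^{-1},g,1)\leftrightarrow g$. For $W$, since $W$ is abelian one has $\M(W)=W(\s-1)$, and evaluating $e_{ijk}(\s-1)=e_{jik}-e_{ijk}$ on the basis $e_{ijk}=e_i\# e_j\# e_k$ shows this image is spanned by the elements $\om\# e_k$, where $\om=e_2\# e_1-e_1\# e_2$; hence $\M(W)=\om\# V$ and I identify $\M(W)\cong V$ by $\om\# v\leftrightarrow v$. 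Because $T\cap W=1$, the product map $\M(T)\times\M(W)\to\M(A)$ is a bijection, so $(g,u)\mapsto (g^{-1},g,1)\cdot(\om\# u)$ (the product taken in $\M(A)$) is the sought bijection $\Phi\colon G\rd V\to\M(A)$.

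Next I would compute the three operators via the realisations of $L_{n,m}$ and $T_m$ by the action $\x$ of single group elements (Lemmas~\ref{m_x}, \ref{h_x}, \ref{gzt}), writing $m\leftrightarrow a$, $n\leftrightarrow b$ and abbreviating by ${\circ}\,M$ the operator $v\mapsto v\circ M$. The proof of Lemma~\ref{m_f_ab} gives $L_{n,m}=\x_h$ with $h=[n^{\r^2},m^{-\r}]$, which a componentwise commutator in $T$ reduces to $h=((1,1,[b^{-1},a^{-1}]),0)$; this acts only on the third tensor factor, so $L_{n,m}={\circ}\,[b^{-1},a^{-1}]$. Likewise $T_m=\x_{\vf(m)}$ with $\vf(m)=m^{-\r}m^{\r^2}=((a,a,a^{-2}),0)$, which sends $\om\# v$ to $\om^{(a,a)}\#(v\circ a^{-2})$. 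The crucial point is the elementary identity $\om^{(a,a)}=(\det a)\,\om$, forced by the antisymmetry of $\om$, and this is precisely where the determinant enters; it yields $T_m={\circ}\,(\det a)\,a^{-2}$, hence $T_m^{-1}={\circ}\,(\det a)^{-1}a^2$ and the stated inverse through $(m.u)^{-1}=m^{-1}.(-uT_m^{-1})$. For the remaining operator I would invoke Lemma~\ref{dxy}$(v)$, whose operator part reads $D_{m,n}=L_{m,n^{-1}}T_nL_{n,m}$; composing the three right-multiplications gives $D_{m,n}={\circ}\,[a^{-1},b](\det b)b^{-2}[b^{-1},a^{-1}]$, which collapses by cancellation to $D_{m,n}={\circ}\,(\det b)\,a b^{-2}a^{-1}$.

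Comparing with (\ref{m_g}) under $g\leftrightarrow a$, $h\leftrightarrow b$ then shows that $\Phi$ transports the operation (\ref{m_g}) to the loop multiplication (\ref{xx_ab}) of $\M(A)$. Since $\M(A)$ is a Moufang loop, it follows that $G\rd V$ is a Moufang loop and $\Phi$ is an isomorphism; its identity and inverse are the $\Phi$-preimages of those of $\M(A)$, namely $(1,0)$ and the displayed formula. I expect the main obstacle to be the computation of $D_{m,n}$: assembling it through Lemma~\ref{dxy}$(v)$ and checking that the three-matrix product collapses correctly—while respecting the left-to-right operator conventions—is the one genuinely error-prone step, whereas $L_{n,m}$, $T_m$, and the determinant identity $\om^{(a,a)}=(\det a)\,\om$ are all short once the identification $\M(W)=\om\# V$ is in place.
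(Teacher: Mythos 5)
Your proposal is correct, and its overall skeleton coincides with the paper's: invoke Lemma~\ref{dim} to get the triality group $A=T\rd W$, identify $\M(T)\cong G$ and $\M(W)$ with $\om\# V$ (the paper's basis $f_k=(e_1\#e_2-e_2\#e_1)\#e_k$ is exactly your $\om\#e_k$ up to sign), read off $L_{n,m}$ from the componentwise commutator $[n^{\r^2},m^{-\r}]=(1,1,[h^{-1},g^{-1}])$, and then transport (\ref{xx_ab}) through the bijection $(g,u)\mapsto m.(\om\#u)$. Where you genuinely diverge is the computation of $D_{m,n}$, which is the bulk of the paper's proof: the paper evaluates the operator $-\r n^{-\r}m^{\r^2}-\r^2 n^{\r^2}m^{-\r}$ directly on the tensor basis $e_{ijk}$ and identifies the resulting $2\times2$ matrix via the adjoint-matrix identity (\ref{m_c}), arriving at $gh^*h^{-1}g^{-1}=(\det h)gh^{-2}g^{-1}$; you instead factor $D_{m,n}=L_{m,n^{-1}}T_nL_{n,m}$ by Lemma~\ref{dxy}$(v)$, realize each factor as a single-element action $\x_k$ (via Lemmas~\ref{gzt}$(vii,viii)$ and \ref{h_x}), and collapse the product $[a^{-1},b]\,(\det b)b^{-2}\,[b^{-1},a^{-1}]=(\det b)ab^{-2}a^{-1}$. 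Your route is shorter and replaces the paper's most error-prone tensor manipulation with three easy matrices, at the cost of leaning on Lemma~\ref{dxy}$(v)$ and on the observation that $L_{m,n^{-1}}$, $T_n$, $L_{n,m}$ all preserve the normal subloop $\M(W)$ (they do, being inner mappings, but this is worth saying explicitly); the determinant enters in both arguments through the same antisymmetry of $\om$, yours via $\om^{(a,a)}=(\det a)\om$ and the paper's via $C^{a,b}$ and $hh^*=(\det h)I$. You also make explicit the derivation of $T_m=\circ\,(\det a)a^{-2}$ and hence the inversion formula, which the paper leaves as ``readily follows.''
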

\begin{proof} Let $e_1,e_2$ be an $R$-basis of $V$. Then $e_{ijk}=e_i\#\,e_j\#\,e_k$,
$i,j,k=1,2$, is a basis of $W=V\#\,V\#\,V$. As above, let $T=G\times G\times G$ with $A=T\rd W$
admitting the natural action of $S$. By Lemma \ref{dim}, $A$ is a group with triality. We see
that $\M(W)$ is spanned by $f_1=e_{121}-e_{211}$ and $f_2=e_{122}-e_{212}$. Let $m,n\in \M(T)$ and $u,w\in \M(W)$.
Lemma \ref{gen_m} implies that in the loop $\M(A)$ we have $(m.u).(n.w)=(m.n).x$, where
$$
x=u^{-\rho n ^{-\rho}m^{\rho^2}-\rho^2n^{\rho^2}m^{-\rho}} + w^{[n^{\rho^2},m^{-\rho}]}.
$$
Let $m=(g^{-1},g,1)$ and $n=(h^{-1},h,1)$ for suitable $g,h\in G$. Then $n^{\r^2}=(h,1,h^{-1})$ and
$m^{-\r}=(1,g,g^{-1})$. Therefore,
$$
[n^{\rho^2},m^{-\rho}] = (h^{-1},1,h)(1,g^{-1},g)(h,1,h^{-1})(1,g,g^{-1})=(1,1,[h^{-1},g^{-1}]).
$$
Observe that $f_k=(e_1\#\,e_2-e_2\#\,e_1)\#\,e_k$, $k=1,2$.
Hence, the matrix of action of $(1,1,[h^{-1},g^{-1}])$ in $\{f_1,f_2\}$ coincides with
the matrix of action of $[h^{-1},g^{-1}]$ in $\{e_1,e_2\}$.

Similarly, we have
\begin{multline}\label{lin}
-\rho n ^{-\rho}m^{\rho^2}-\rho^2n^{\rho^2}m^{-\rho}=-n^{-1}m^\r\r-nm^{-\r^2}\r^2
=-(h,h^{-1},1)(1,g^{-1},g)\r\\
-(h^{-1},h,1)(g^{-1},1,g)\r^2=-(h,h^{-1}g^{-1},g)\r-(h^{-1}g^{-1},h,g)\r^2,
\end{multline}
and this operator send the basis element $e_{ijk}$ to
$$
-e_kg\#\,e_ih\#\,e_jh^{-1}g^{-1}-e_jh\#\,e_kg\#\,e_ih^{-1}g^{-1}.
$$
Therefore, $f_k=(e_1\#\,e_2-e_2\#\,e_1)\#\,e_k$ is sent to

\be\label{st}
(e_kg\#\,e_2h-e_2h\#\,e_kg)\#\,e_1h^{-1}g^{-1}-
(e_kg\#\,e_1h-e_1h\#\,e_kg)\#\,e_2h^{-1}g^{-1}.
\ee
Observe that, for any $a,b\in \mathrm{M}_2(R)$ with $a=(a_{jl})$, $b=(b_{jl})$ and any $k,i=1,2$, we have

\setlength\multlinegap{.5\multlinegap}
\begin{multline*}
e_ka\#\,e_ib-e_ib\#\,e_ka=(a_{k1}e_1+a_{k2}e_2)\#\,(b_{i1}e_1+b_{i2}e_2)-\\
(b_{i1}e_1+b_{i2}e_2)\#\,(a_{k1}e_1+a_{k2}e_2)=(a_{k1}b_{i2}-a_{k2}b_{i1})(e_1\#\,e_2-e_2\#\,e_1).
\end{multline*}
Denoting $C^{a,b}_{ki}=a_{k1}b_{i2}-a_{k2}b_{i1}$, we check that
\be\label{m_c}
\left(
  \begin{array}{cc}
    C^{a,b}_{12} & -C^{a,b}_{11} \\
    C^{a,b}_{22} & -C^{a,b}_{21} \\
  \end{array}
\right)=ab^*,
\ee
where $b^*$ is the adjoint matrix of $b$, i.e. $bb^*=(\det b)I$ with $I$ the identity matrix.
Using this notation, (\ref{st}) can be rewritten as
\begin{multline*}
C^{g,h}_{k2}(e_1\#\,e_2-e_2\#\,e_1)\#\,(e_1h^{-1}g^{-1})-
C^{g,h}_{k1}(e_1\#\,e_2-e_2\#\,e_1)\#\,(e_2h^{-1}g^{-1})\\
=(C^{g,h}_{k2}(h^{-1}g^{-1})_{11}-C^{g,h}_{k1}(h^{-1}g^{-1})_{21})f_1 +
 (C^{g,h}_{k2}(h^{-1}g^{-1})_{12}-C^{g,h}_{k1}(h^{-1}g^{-1})_{21})f_2.
\end{multline*}
Therefore, (\ref{m_c}) implies that the matrix of the transformation (\ref{lin}) in the basis
$\{f_1,f_2\}$ equals
\begin{multline*}
\left(
  \begin{array}{cc}
    C^{g,h}_{12}(h^{-1}g^{-1})_{11}-C^{g,h}_{11}(h^{-1}g^{-1})_{21} & C^{g,h}_{12}(h^{-1}g^{-1})_{12}-C^{g,h}_{11}(h^{-1}g^{-1})_{21} \\
    C^{g,h}_{22}(h^{-1}g^{-1})_{11}-C^{g,h}_{21}(h^{-1}g^{-1})_{21} & C^{g,h}_{22}(h^{-1}g^{-1})_{12}-C^{g,h}_{21}(h^{-1}g^{-1})_{21} \\
  \end{array}
\right)\\
=gh^*h^{-1}g^{-1}=(\det h)gh^{-2}g^{-1}.
\end{multline*}
It is clear from this discussions that under the map $m.u\mapsto (g,u)$, where $m=(g^{-1},g,1)$,
the multiplication formula from Lemma \ref{m_f_ab} for the loop $\M(A)$ takes the required
form (\ref{m_g}) thus giving the claimed isomorphism. The assertions about the identity and inversion
readily follow.
\end{proof}

The loop $G\rd V$ from Theorem \ref{gd} has a naturally embedded subgroup isomorphic to $G$ with
elements of the form $(g,0)$ and a normal subgroup isomorphic to $V$ with elements $(1,v)$.
We will identify $G$ and $V$ with these corresponding subgroups and call $G\rd V$ their
(outer) {\em Moufang semidirect product}.

It is easily checked that $G\rd V$ is nonassociative if and only if $G$ is nonabelian.
Let $G_0$ be the scalar subgroup of $G$. Then $G_0$ is a central (hence, normal) subloop of $G\rd V$.
The factor loop $(G\rd V)/G_0$, which we denote by $\ov{G}\rd V$ with $\ov{G}=G/G_0$, has a `projective'
analog of the formulas (\ref{m_g}) with $g,h$ replaced by $\ov{g},\ov{h}$, because the operators
$(\det h)gh^{-2}g^{-1}$ and $[h^{-1},g^{-1}]$ are constant on the cosets $G:G_0$. We note that
$\ov{G}\rd V$ can be nonassociative even when $\ov{G}$ is abelian.

The existence of the following Moufang semidirect product of finite simple groups follows from
the above construction along with the subgroup structure \cite{bhr} of $\GL_2(q)$:

\begin{enumerate}
\item[$\bullet$] $\PSL_2(q)\rd (\FF_q\oplus \FF_q)$, where $q\ge 4$ is a prime power;
\item[$\bullet$] $A_5\rd (\FF_p\oplus \FF_p)$, where $p\equiv \pm 1\pmod{10}$ is prime;
\item[$\bullet$] $A_5\rd (\FF_{p^2} \oplus \FF_{p^2})$, where $p\equiv \pm 3\pmod{10}$ is prime.
\end{enumerate}

\section{Embedding in a Cayley algebra}

A loop of shape $\GL_2(R)\rd (R \oplus R)$ also appears as a parabolic subloop of the invertible elements of the split Cayley
$R$-algebra $\OO=\OO(R)$.  We recall that $\mathbb{O}$ can be defined
as the  set of all {\em Zorn matrices}
$$
\left( \begin{array}{cc}
  a & {\bf v} \\
 {\bf w} &  b \\
\end{array}
\right), \ \  a, b \in R, \ \ {\bf v},{\bf w}\in R^3
$$
with the natural structure of a free $R$-module and
multiplication given by the rule
\begin{equation} \label{cayley_mult}
\begin{array}{r@{}l}
\left( \begin{array}{cc}
  a_1 & {\bf v}_1 \\
 {\bf w}_1 &  b_1 \\
\end{array}
\right)\cdot
\left( \begin{array}{cc}
 a_2 & {\bf v}_2 \\
 {\bf w}_2 & b_2 \\
\end{array}
\right) =& \\[20pt]
\left( \begin{array}{cc}
 a_1 a_2+ {\bf v}_1 \cdot {\bf w}_2 &  a_1 {\bf v}_2 + b_2 {\bf v}_1 \\
 a_2 {\bf w}_1 +  b_1 {\bf w}_2 &  {\bf w}_1\cdot {\bf v}_2 +  b_1 b_2 \\
\end{array}
\right)& +
\left( \begin{array}{cc}
 0 & - {\bf w}_1\times {\bf w}_2 \\
 {\bf v}_1\times {\bf v}_2 & 0 \\
\end{array}
\right),
\end{array}
\end{equation}
where, for ${\bf v}=(v_1,v_2,v_3)$ and ${\bf w}=(w_1,w_2,w_3)$ in
$R^3$, we denoted
$$
\begin{array}{c}
{\bf v}\cdot{\bf w}= v_1w_1+v_2w_2+v_3w_3\in R,\\
{\bf v}\times{\bf w}=(v_2w_3-v_3w_2,v_3w_1-v_1w_3,v_1w_2-v_2w_1)\in R^3.
\end{array}
$$

It is well-known that $\mathbb{O}$ is an alternative algebra and the set of its invertible
elements $\mathbb{O}^\times$ is a Moufang loop. The parabolic subloop of $\mathbb{O}^\times$
can be identified with the set of elements of the form
\be\label{p_e}
\left( \begin{array}{cc}
  a_{11} & (0,a_{12},r_1) \\
 (r_2,a_{21},0) &  a_{22} \\
\end{array}
\right),
\ee
where $(a_{ij})\in GL_2(R)$ and $r_1,r_2\in R$.

\begin{lem} Let $V$ be the free $R$-module of rank $2$ with the natural action ``$\,\circ$'' of $\GL_2(R)$.
Then the outer Moufang semidirect product $\GL_2(R)\rd V$ is isomorphic to a parabolic subloop of the Cayley
algebra $\OO(R)$.
\end{lem}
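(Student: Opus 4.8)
The plan is to exhibit an explicit isomorphism $\psi$ from the Moufang semidirect product $\GL_2(R)\rd V$ of Theorem \ref{gd} onto the parabolic subloop $P$ consisting of the Zorn matrices (\ref{p_e}). First I would record two natural families inside $P$. For $g=(g_{ij})\in\GL_2(R)$ set $G_g=\left(\begin{smallmatrix} g_{11} & (0,g_{12},0)\\ (0,g_{21},0) & g_{22}\end{smallmatrix}\right)$, and for $t=(t_1,t_2)\in R^2$ set $E_t=\left(\begin{smallmatrix} 1 & (0,0,t_1)\\ (t_2,0,0) & 1\end{smallmatrix}\right)$. A short computation with the Zorn rule (\ref{cayley_mult}) shows that $g\mapsto G_g$ reproduces the product of $2\times 2$ matrices (its norm is $\det g$, so it lands in $\OO^\times$), while $t\mapsto E_t$ embeds the additive group $R^2$ as an abelian subgroup. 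Every element (\ref{p_e}) factors uniquely as $G_gE_t$, with corner coordinates $(r_1,r_2)^{T}=g^{T}t$, giving a bijection $P\leftrightarrow\GL_2(R)\times R^2$.

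Next I would compute the product of two arbitrary elements of $P$ directly from (\ref{cayley_mult}). Reading off the four blocks shows that the central $2\times 2$ part multiplies exactly as in $\GL_2(R)$, so the map $P\to\GL_2(R)$ sending (\ref{p_e}) to $(g_{ij})$ is a loop homomorphism whose kernel is $\{E_t\}$; in particular $\{E_t\}$ is a normal abelian subloop and the product again has parabolic shape. Recording the two corner coordinates yields, in the raw parametrization, $r'=(h^{*})^{T}p+g^{T}q$, where $p,q$ are the corner coordinates of the two factors and $h^{*}$ is the adjugate. Passing to the factored parametrization through $p=g^{T}t$, $q=h^{T}s$, and using $(gh)^{T}t'=r'$, I obtain $t'=(\det h)\,(g^{T})^{-1}(h^{T})^{-2}g^{T}\,t+(g^{T})^{-1}(h^{T})^{-1}g^{T}h^{T}\,s$.

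Finally I would define $\psi(g,u)=G_gE_{u}$ under a fixed $R$-linear identification of $V$ with the corner coordinates $R^2$, and match the last display with the multiplication (\ref{m_g}) of $\GL_2(R)\rd V$. Since $(h^{T})^{-1}(h^{*})^{T}=(\det h)(h^{T})^{-2}$, the coefficient of $t$ is the transpose of $(\det h)gh^{-2}g^{-1}$; and because $[h^{-1},g^{-1}]=hgh^{-1}g^{-1}$, the coefficient of $s$ equals $(g^{T})^{-1}(h^{T})^{-1}g^{T}h^{T}=\big([h^{-1},g^{-1}]\big)^{T}$. Thus $\psi$ transports (\ref{m_g}) to the product induced from $\OO$, so $\psi$ is a bijective homomorphism, $P$ is the isomorphic image of the loop $\GL_2(R)\rd V$, and the assertions about identity and inverses follow as in Theorem \ref{gd}.

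The hard part is the bookkeeping in the second and third steps: one must track transposes, the adjugate-versus-inverse relation $h^{*}=(\det h)h^{-1}$, and the determinant factor, in order to see that the $g$-independent corner action $(h^{*})^{T}$ becomes the conjugated operator $(\det h)gh^{-2}g^{-1}$ \emph{precisely} after the twist $p=g^{T}t$. Choosing the factored parametrization $G_gE_t$, rather than the raw corner coordinates, is the key point that makes the $g$-dependence of (\ref{m_g}) appear, and keeping the action conventions for $V$ and $R^2$ consistent is where care is needed.
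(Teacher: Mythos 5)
Your proof is correct and follows essentially the same route as the paper: the paper likewise identifies the parabolic element (\ref{p_e}) with the pair $(a,r\circ a^{-1})$ — exactly your twisted parametrization $G_gE_t$ with $r^{T}=g^{T}t$, up to the row-versus-column transpose convention — and then reads off the product from the Zorn rule (\ref{cayley_mult}) to recover (\ref{m_g}). Your extra verifications (that $g\mapsto G_g$ is multiplicative, that the $E_t$ form the kernel of the projection to $\GL_2(R)$) are sound but not needed beyond what the direct computation already gives.
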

\begin{proof} Identifying the element (\ref{p_e}) with the pair $(a,r\circ a^{-1})$, where $a=(a_{ij})$,
$r=(r_1,r_2)$, one easily checks using (\ref{cayley_mult}) that such pairs are multiplied in $\OO^\times$
as follows
$$
(a,r\circ a^{-1})\cdot(b,s\circ b^{-1}) = (ab,(r\circ b^*+s\circ a)\circ (ab)^{-1}),
$$
where $b^*$ is the adjoint matrix of $b$. Setting $u=r\circ a^{-1}$ and $w=s\circ b^{-1}$, we have
$$
(a,u)\cdot(b,w) = (ab,u\circ ab^*b^{-1}a^{-1}+w\circ bab^{-1}a^{-1}),
$$
which coincides with the formula (\ref{m_g}).
\end{proof}

As a corollary we see that the Moufang semidirect product $\PSL_2(q)\rd (\FF_q\oplus \FF_q)$ is isomorphic
to a maximal parabolic subloop of the finite simple Moufang loop $M(q)$, see \cite{gz_max}.

\section{The action of Moufang loops on abelian groups}

Let $R$ be a commutative associative unital ring and let $A$ be an alternative $R$-algebra with unit $\I$. It is well known \cite[Lemma 2.3.7]{she} that the Moufang
identities hold in $A$. In particular, the set of invertible elements
$A^\times$ of $A$ is a Moufang loop. For every $x\in A^\times$, the maps $L_x:a\mapsto xa$, $R_x:a\mapsto ax$ are invertible linear operators of~$A$ and we may also define the following operators
$$
T_x=L_x^{-1}R_x, \qquad L_{x,y}=L_xL_yL_{yx}^{-1}, \qquad D_{x,y}=L_xR_yL_{xy}^{-1}.
$$
Clearly, when restricted to $A^\times$ these operators coincide with those defined in (\ref{ops})
and Lemma \ref{dxy}$(iv)$ for Moufang loops.

\begin{lem}\label{alt_op} Let $A$ be an alternative $R$-algebra. Then, for all $m,n,k\in A^\times$, we have
\begin{enumerate}
\item[$(i)$] $D_{m,n}=L_{m,n^{-1}}T_nL_{n,m}$;
\item[$(ii)$] $L_{n,m}D_{mn,km}=D_{n,k}L_{nk,m}D_{m\cdot nk,m}$;
\item[$(iii)$] $D_{k,m}L_{km,mn} = L_{k,n}L_{nk,m}D_{m\cdot nk,m}$;
\item[$(iv)$] $D_{m,n}D_{mn,km} + L_{m,k}L_{km,mn} =
                D_{m,nk}D_{m\cdot nk,m}+L_{m,m\cdot nk}$.
\end{enumerate}
\end{lem}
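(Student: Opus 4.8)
The plan is to reduce each identity to an inverse-free relation among the basic operators $L_x,R_x$ by a telescoping cancellation, and then to verify the resulting relation on an arbitrary element $a\in A$ using the Moufang identities, which hold throughout $A$ because $A$ is alternative. Everything is driven by the factorizations $D_{x,y}=L_xR_yL_{xy}^{-1}$ and $L_{x,y}=L_xL_yL_{yx}^{-1}$ together with the single Moufang identity
\[
(mn)(km)=m((nk)m)=(m\cdot nk)m,
\]
obtained from the middle Moufang law followed by flexibility. In particular this shows that the ``denominators'' satisfy $L_{(mn)(km)}=L_{(m\cdot nk)m}$, so the outermost inverse factors produced on the two sides of $(ii)$--$(iv)$ will match.

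Part $(i)$ requires no new work: it is precisely the operator equality proved in Lemma \ref{dxy}$(v)$, whose proof is carried out entirely at the level of operators and reduces to $R_xR_yL_yL_x=L_yR_xL_xR_y$, a consequence of the Moufang laws applied to an arbitrary argument. Since those laws hold for every element of $A$, the identity holds as an operator identity on all of $A$. For $(ii)$--$(iv)$ I would substitute the two factorizations into each side; the interior factors then telescope, each $L_{xy}^{-1}$ being immediately cancelled by a following $L_{xy}$, while the outer factors cancel via $L_{(mn)(km)}^{-1}=L_{(m\cdot nk)m}^{-1}$. After removing the surviving common left factor ($L_n$, $L_k$, $L_m$ for $(ii)$, $(iii)$, $(iv)$ respectively) and the common right factor $L_{(m\cdot nk)m}^{-1}$, the three statements collapse to
\[
L_mR_{km}=R_kL_mR_m,\qquad R_mL_{mn}=L_nL_mR_m
\]
for $(ii)$ and $(iii)$, and to the additive relation
\[
R_nR_{km}+L_kL_{mn}=R_{nk}R_m+L_{m\cdot nk}
\]
for $(iv)$.

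The first two reduced relations are verified by a single pass of middle Moufang and flexibility: evaluated at $a$ they read $(ma)(km)=m((ak)m)=(m(ak))m$ and $(mn)(am)=m((na)m)=(m(na))m$, which hold for every $a\in A$.

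The crux is $(iv)$, where the reduced relation is genuinely additive; evaluated at $a$ it becomes
\[
(an)(km)+(mn)(ka)=(a(nk))m+(m(nk))a.
\]
I would produce the left-hand side from the bilinearization of the middle Moufang law $(xy)(zx)=x((yz)x)$ in its repeated variable, namely $(xy)(zw)+(wy)(zx)=x((yz)w)+w((yz)x)$, taken at $x=a$, $w=m$, $y=n$, $z=k$; this gives $(an)(km)+(mn)(ka)=a((nk)m)+m((nk)a)$. The residual discrepancy with the right-hand side is
\[
\big((a(nk))m-a((nk)m)\big)+\big((m(nk))a-m((nk)a)\big),
\]
a sum of the two associators $(xy)z-x(yz)$ with entries $(a,nk,m)$ and $(m,nk,a)$, which cancel since the associator is alternating in $A$. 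I expect this to be the only real obstacle: the point is to recognize that the relevant linearization is the one in the \emph{repeated} variable of middle Moufang, after which the leftover is forced to be an alternating sum of associators and vanishes. The preliminary telescoping of the inverse factors is exactly what turns all four parts into these short multilinear checks.
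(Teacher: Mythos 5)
Your proof is correct and follows essentially the same route as the paper's: substitute the factorizations $D_{x,y}=L_xR_yL_{xy}^{-1}$ and $L_{x,y}=L_xL_yL_{yx}^{-1}$, telescope the inverse factors, reduce $(ii)$--$(iv)$ to the same three relations $L_mR_{km}=R_kL_mR_m$, $R_mL_{mn}=L_nL_mR_m$, and $R_nR_{km}+L_kL_{mn}=R_{nk}R_m+L_{m\cdot nk}$, and verify these from the Moufang identities and their linearization. The only cosmetic difference is in $(iv)$, where the paper linearizes the form $xn\cdot kx=(x\cdot nk)x$ at $x=m+a$ and so lands directly on the required right-hand side, whereas you linearize $(xy)(zx)=x((yz)x)$ and then need the extra (valid) observation that the two leftover associators cancel by alternativity.
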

\begin{proof} $(i)$ This can be proved as Lemma \ref{dxy}$(v)$, where we only used the Moufang identities
which also hold in alternative algebras.

$(ii)$ By definition, we have
\begin{multline*}
L_{n,m}D_{mn,km}=L_nL_mL_{mn}^{-1}L_{mn}R_{km}L_{mn\cdot km}^{-1}=L_nL_mR_{km}L_{mn\cdot km}^{-1},\\
\shoveleft{D_{n,k}L_{nk,m}D_{m\cdot nk,m}=
L_nR_kL_{nk}^{-1}L_{nk}L_mL_{m\cdot nk}^{-1}L_{m\cdot nk}R_mL_{m\cdot nk \cdot m}^{-1} } \\
=L_nR_kL_mR_mL_{m\cdot nk \cdot m}^{-1}.
\end{multline*}
Since $A^\times$ is a Moufang loop, it remains to show that $L_mR_{km}=R_kL_mR_m$. This follows from
$ma\cdot km = (m\cdot ak)\cdot m$
for every $a\in A$, since the Moufang identities hold in $A$.

$(iii)$ We have
\begin{multline*}
D_{k,m}L_{km,mn}=L_kR_mL_{km}^{-1}L_{km}L_{mn}L_{mn\cdot km}^{-1}=L_kR_mL_{mn}L_{mn\cdot km}^{-1},\\
\shoveleft{L_{k,n}L_{nk,m}D_{m\cdot nk,m}=L_kL_nL_{nk}^{-1}L_{nk}L_mL_{m\cdot nk}^{-1}L_{m\cdot nk}
R_mL_{m\cdot nk \cdot m} }\\
=L_kL_nL_mR_mL_{m\cdot nk \cdot m}.
\end{multline*}
Again, it suffices to show that $R_mL_{mn}=L_nL_mR_m$. This follows from the Moufang identity
$mn\cdot am = (m\cdot na)\cdot m$.

$(iv)$ We have

\begin{multline*}
D_{m,n}D_{mn,km} + L_{m,k}L_{km,mn} = L_mR_nL_{mn}^{-1}L_{mn}R_{km}L_{mn\cdot km}^{-1}\\
\shoveright{ +L_mL_kL_{km}^{-1}L_{km}L_{mn}L_{mn\cdot km}^{-1}=
L_m(R_nR_{km}+L_kL_{mn})L_{mn\cdot km}^{-1}, }\\
\shoveleft{D_{m,nk}D_{m\cdot nk,m}+L_{m,m\cdot nk}=L_mR_{nk}L_{m\cdot nk}^{-1}
L_{m\cdot nk}R_mL_{m\cdot nk \cdot m}^{-1}}\\
+L_mL_{m\cdot nk}L_{m\cdot nk\cdot m}^{-1}=
L_m(R_{nk}R_m+L_{m\cdot nk})L_{m\cdot nk\cdot m}^{-1}.
\end{multline*}
The equality $R_nR_{km}+L_kL_{mn} = R_{nk}R_m+L_{m\cdot nk}$ follows from
$$
an\cdot km + mn\cdot ka = (a\cdot nk)\cdot m + (m\cdot nk)a
$$
for every $a\in A$, which in turn is the linearization of the Moufang identity $xn\cdot kx = (x\cdot nk)x$
with $x=m+a$.
\end{proof}

\begin{thm}\label{sd} Let $A$ be an alternative $R$-algebra and let $M$ be a subloop of $A^\times$.
Let $U$ a subgroup of the additive group of $A$ that is invariant under the operators
$T_m$ and $L_{n,m}$ for all $m,n\in M$.
Denote by $M\rd U$ the set of pairs $(m,u)$ for $m\in M$, $u\in U$. Then
with respect to the operation
\be\label{m_a}
(m,u)\cdot(n,w)=(mn,uD_{m,n}+wL_{n,m})
\ee
$M\rd U$ becomes a Moufang loop with identity $(1,0)$ and inversion
$$
(m,u)^{-1}=(m^{-1},-uT_{m}^{-1})
$$
\end{thm}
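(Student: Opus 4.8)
The plan is to verify the loop axioms directly and then establish the Moufang property by reducing it, coefficient by coefficient, to the operator identities collected in Lemma \ref{alt_op}. First I would check that the operation (\ref{m_a}) is well-defined, i.e.\ that $uD_{m,n}$ and $wL_{n,m}$ lie in $U$: the latter holds by hypothesis, and the former follows because $D_{m,n}=L_{m,n^{-1}}T_nL_{n,m}$ by Lemma \ref{alt_op}$(i)$, a composite of operators under which $U$ is invariant. The element $(1,0)$ is a two-sided identity since $D_{m,1}=L_mR_1L_m^{-1}=\mathrm{id}$ (as $R_1=\mathrm{id}$) and $L_{n,1}=L_nL_1L_n^{-1}=\mathrm{id}$. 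For the loop property, given $(m,u)$ and a target $(p,s)$, the equation $mn=p$ has a unique solution $n\in M$ because $M$ is a loop, after which $w=(s-uD_{m,n})L_{n,m}^{-1}$ is forced; here I use that $L_{n,m}$, and likewise $D_{n,m}=L_{n,m^{-1}}T_mL_{m,n}$, restrict to bijections of $U$, so that both the left and the right division have unique solutions.

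The heart of the proof is the central Moufang identity $(xy)(zx)=(x(yz))x$, which I would verify for $x=(m,a)$, $y=(n,b)$, $z=(k,c)$. On the first coordinate both sides give $mn\cdot km=m\cdot nk\cdot m$, which holds because $M\subseteq A^\times$ is Moufang. Expanding the second coordinates via (\ref{m_a}) produces two $R$-linear expressions in $a$, $b$, $c$, and the identity will follow by matching the three coefficients separately. The coefficient of $a$ equals $D_{m,n}D_{mn,km}+L_{m,k}L_{km,mn}$ on the left and $D_{m,nk}D_{m\cdot nk,m}+L_{m,m\cdot nk}$ on the right, so they agree by Lemma \ref{alt_op}$(iv)$; the coefficient of $b$ is $L_{n,m}D_{mn,km}$ versus $D_{n,k}L_{nk,m}D_{m\cdot nk,m}$, matched by Lemma \ref{alt_op}$(ii)$; and the coefficient of $c$ is $D_{k,m}L_{km,mn}$ versus $L_{k,n}L_{nk,m}D_{m\cdot nk,m}$, matched by Lemma \ref{alt_op}$(iii)$. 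Thus the three parts of Lemma \ref{alt_op} are exactly what the expansion demands, and the Moufang identity holds.

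Finally I would confirm the inversion formula. Computing $(m,u)\cdot(m^{-1},-uT_m^{-1})$ gives first coordinate $1$ and second coordinate $u(D_{m,m^{-1}}-T_m^{-1}L_{m^{-1},m})$, so it suffices to show $D_{m,m^{-1}}=T_m^{-1}L_{m^{-1},m}$. By Lemma \ref{alt_op}$(i)$ we have $D_{m,m^{-1}}=L_{m,m}T_{m^{-1}}L_{m^{-1},m}$, where $L_{m,m}=\mathrm{id}$ by left alternativity ($L_{m^2}=L_m^2$), while flexibility of $A$ together with the inverse property gives $T_{m^{-1}}=T_m^{-1}$; the product $(m^{-1},-uT_m^{-1})\cdot(m,u)$ is handled identically, and since the loop is already known to be Moufang a one-sided check in fact suffices. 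The genuine obstacle is confined to the second paragraph: the bookkeeping of the composite operators $D_{mn,km}$, $L_{km,mn}$, $D_{m\cdot nk,m}$ arising from the iterated products, and the recognition that the \emph{central} Moufang identity, rather than one of its one-sided relatives, is the form whose three coefficient identities coincide precisely with Lemma \ref{alt_op}$(ii)$--$(iv)$.
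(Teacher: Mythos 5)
Your proof is correct and follows essentially the same route as the paper: both verify the central Moufang identity for $(m,u)$, $(n,w)$, $(k,v)$ by matching the coefficients of $u$, $w$, $v$ against parts $(iv)$, $(ii)$, $(iii)$ of Lemma \ref{alt_op}, and both dispose of the identity and inversion via the degenerate values of $D_{x,y}$ and $L_{x,y}$. The only addition is your explicit check of unique division, where the asserted bijectivity of $L_{n,m}$ on $U$ deserves one line of justification --- e.g.\ $L_{n,m}^{-1}=L_{mn,m^{-1}}$, which is again an operator of the permitted form because $L_{m^{-1}}=L_m^{-1}$ in an alternative algebra --- but this is a correct and easily filled detail that the paper's own proof does not address at all.
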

\begin{proof} First observe that $U$ is also invariant under $D_{m,n}$ due to Lemma \ref{alt_op}$(i)$; in
particular, $uD_{m,n}+wL_{n,m}$ lies in $U$.

We show that the Moufang identity $ab\cdot ca = (a\cdot bc)a$ holds for
arbitrary $a=(m,u)$, $b=(n,w)$, $c=(k,v)$ in $M\rd U$. We have
\begin{align*}
&ab\cdot ca = (mn,uD_{m,n}+wL_{n,m})\cdot (km,vD_{k,m}+uL_{m,k})\\
=\,&(mn\cdot km, (uD_{m,n}+wL_{n,m})D_{mn,km}+
(vD_{k,m}+uL_{m,k})L_{km,mn}).
\end{align*}
Also,
\begin{align*}
(&a\cdot bc)a = (a\cdot (nk,wD_{n,k}+vL_{k,n})) a =
(m\cdot nk, u D_{m,nk} +   (wD_{n,k}+vL_{k,n})L_{nk,m})a\\
=&\,((m\cdot nk)\cdot m, (u D_{m,nk} +   (wD_{n,k}+vL_{k,n})L_{nk,m})
D_{m\cdot nk,m} +u L_{m,m\cdot nk} ).
\end{align*}
The first components are equal, since $M$ is a Moufang loop. The second components are equal, since
the operators acting on $w,v,u$ coincide due to $(ii),(iii),(iv)$ of Lemma \ref{alt_op}, respectively.

The assertions about the identity and inversion are readily verified, once we note that
$D_{x,x^{-1}}=T_{x^{-1}}$, $D_{1,x}=T_{x}$, and $D_{x,1}=L_{x,x^{-1}}=L_{1,x}=L_{x,1}$
is the identity operator.
\end{proof}

{\em Remark 1.\/} Instead of considering $M$ as a subloop of $A^\times$ in Theorem \ref{sd},
we may clearly generalize this to an arbitrary loop homomorphism $M\to A^\times$.
In this situation we will say that the loop $M$ {\em acts} on the abelian subgroup $U$ of $A$ and
call $L=M\rd U$ an {\em outer semidirect product} of $M$ and $U$. We will identify $M$ and $U$
with their isomorphic images in $L$ consisting of elements $(m,0)$ and $(1,u)$, respectively.

{\em Remark 2.\/} Once we make this identification, the multiplication and inversion formulas for $L$
take the {\em inner} form
\be\label{mf_inn} \ba{r@{\,}l}
mu\cdot nw &= mn\cdot x,   \quad \text{with} \quad  x = uD_{m,n} + wL_{n,m},\\
(mu)^{-1} &= m^{-1}\cdot y,  \quad \text{with} \quad y = -uT_m^{-1},
\ea \ee
which means that the operators $T_m$, $D_{m,n}$, $L_{n,m}$ can be viewed as inner loop operators in $L$ without
reference to the alternative algebra $A$, thus agreeing with the formulas (\ref{xx_ab}).
Indeed, for example, $(1,u)$ is sent by the loop operator $L_{(m,0),(n,0)}$ to
\begin{multline*}
((nm)^{-1},0)((n,0)\cdot(m,0)(1,u))=((nm)^{-1},0)\cdot (n,0)(m,u)\\
=((nm)^{-1},0)\cdot (nm,uL_{m,n})=(1,uL_{m,n}),
\end{multline*}
and similarly for $D_{m,n}$ and $T_m$.

{\em Remark 3.\/} Let $M_0$ be the scalar multiples of $\I$ contained in $M$. It is easily checked that
$M_0$ is a central subloop of $M\rd U$ and we denote the factor loop $(M\rd U)/M_0$ by $\ov{M}\rd U$,
where $\ov{M}=M/M_0$.

{\em Remark 4.\/} If $U_0$ is a subgroup of $U$ invariant under
$T_m$, $L_{n,m}$
then $U_0$ is a normal subloop of $M\rd U$. In particular, this gives the Moufang loop
$(M\rd U)/U_0$ which we denote by $(M\rd \ov{U})$ with $\ov{U}=U/U_0$.
Combining with the previous remark, we also have a Moufang loop $\ov{M}\rd \ov{U}$.

\section{Semidirect products for simple Moufang loops \label{sml}}

We may use the previous results to construct outer semidirect product of
loops that arise from simple alternative algebras.

Let $A$ be a Cayley-Dickson algebra over a field $F$ equipped with a nondegenerate quadratic form $Q$ that
makes $A$ a composition algebra and an orthogonal space, see \cite[Chap. 2]{she}. Let $U=\I^\bot$, which is
a $7$-dimensional subspace of $A$. For all $m,n\in A^\times$
the operators $L_{m,n}$ and $T_m$ leave fixed the unit $\I$, and since they also preserve $Q$, they
leave invariant the subspace $U$. If $M\le A^\times$, by Theorem \ref{sd}, there exists the
Moufang loop $M\rd U$. Moreover, if $F$ has characteristic $2$,
we have $U_0=\la \I \ra\le U$.
Then $U/U_0$ is $6$-dimensional and we have the Moufang loop $M\rd \ov{U}$.

Denote by $\SL(A)$ the loop of units of $A$ of norm $1$ and by $\PSL(A)$ its quotient by the scalars.
The latter loop is not always simple, but it is such in the following two
important special cases \cite{pg}.
First, if $A$ is the classic division algebra of
octonions over $\RR$, then $\PSL(A)$ is simple and we have the Moufang loop $\PSL(A)\rd \RR^{\oplus 7}$.
Second, if $A$ is a split Cayley algebra over any field $F$. Then
$\PSL(A)$ is simple and we have the loops $\PSL(A)\rd F^{\oplus n}$, where $n=6$ or $7$ according as
$F$ has characteristic $2$ or otherwise.

In particular, the following semidirect products for finite simple Paige--Moufang loops $M(q)$ exist:
\begin{enumerate}
\item[$\bullet$] $M(q)\rd \FF_q^{\oplus 7}$, where $q$ is an odd prime power;
\item[$\bullet$] $M(q)\rd \FF_q^{\oplus 6}$, where $q$ is a power of $2$;
\item[$\bullet$] $M(2)\rd \FF_p^{\oplus 7}$, where $p$ is an odd prime.
\end{enumerate}
The extensions in the last exceptional case exist due to the
embedding of $M(2)$ as a maximal subloop into $M(p)$ for $p$
an odd prime, see \cite{gz_max}.

%
\section{Extensions of Moufang loops with abelian kernel \label{last}}

Let $M$ be a Moufang loop and $U$ an abelian group.
Suppose there is a short exact sequence of Moufang
loops
\be\label{es}
1\to U\to E \to M \to 1
\ee
We will identify $U$ with its image in $E$ and say that the extension (\ref{es}) of $M$
is {\em minimal}, if $U$ contains no subgroup that is a normal subloop of $E$.

The known nontrivial (i.\,e. nonassociative and not of the form $U\times M$)
minimal extensions of finite simple noncyclic Moufang loops are as follows:

\begin{enumerate}
\item[$\bullet$] Moufang semidirect products $G\rd V$ for a finite simple group $G$ listed in
Section \ref{mtg}.
\item[$\bullet$] Outer semidirect products $M\rd U$ for finite simple Moufang loops $M$ listed in
Section \ref{sml}.
\item[$\bullet$] Nonsplit central extensions $1\to \ZZ/2\ZZ  \to E \to M(q) \to 1$, where $q$ is an odd
prime power or $q=2$.
\end{enumerate}
The extensions in the last case are isomorphic to $\SL(A)$, where $A=\mathbb{O}(q)$ is the finite
Cayley algebra if $q$ is odd, and to the exceptional double cover of $M(2)$
of order $240$ if $q=2$, see \cite{gz_max}.
 The minimality of these extensions is quite apparent from their construction. We put forward

\begin{conj} Up to isomorphism, the only nontrivial minimal extensions for
finite simple noncyclic Moufang loops are those given in the list above.
\end{conj}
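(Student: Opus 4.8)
The plan is to recast the classification of minimal abelian extensions as a module problem for groups with triality and then to resolve that problem separately for the associative and the Paige loop cases, invoking the classification of finite simple Moufang loops. First I would pass from the loop extension (\ref{es}) to groups with triality. Fix a triality group $G_M$ with $[G_M,S]=G_M$ and $\M(G_M)=M$; such a group exists and, under the condition $[G,S]=G$, is essentially canonical, see \cite{dor}. A minimal extension then lifts to an $S$-group $G_E$ together with an abelian normal $S$-subgroup $V$ satisfying $\M(V)=U$ and $G_E/V\cong G_M$. Normal subloops of $E=\M(G_E)$ contained in $U$ correspond exactly to $S$-invariant $G_M$-submodules of $V$, so minimality of the extension is precisely irreducibility of $V$ as a module for $G_M$ equipped with its compatible $S$-action. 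Since $V$ is abelian, Lemma \ref{ab_nor} forces $(L,U,U)=1$, a strong constraint on the admissible kernels. The classification thus reduces to two tasks: (a) determine all irreducible $S$-admissible modules $V$ for the triality group of each finite simple noncyclic Moufang loop $M$, and (b) decide the extension class, split versus nonsplit, in each case.

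For $M$ an associative simple Moufang loop, i.e. a finite nonabelian simple group $G$, I would use the wreathlike triality group of Section \ref{mtg}, whose corresponding loop is $G$. An irreducible admissible kernel is then an outer tensor module $V\#\,V\#\,V$, and Lemma \ref{dim} shows this admits triality only when the $R$-rank of $V$ is at most $2$. A nontrivial minimal extension therefore forces a faithful two-dimensional representation $G\hookrightarrow\GL_2(R)$, after which Theorem \ref{gd} produces exactly the semidirect products $G\rd V$. The remaining step is to determine which finite simple noncyclic groups admit such a representation over a commutative ring in a way that keeps the kernel irreducible; I expect this to single out precisely $\PSL_2(q)$ and $A_5\cong\PSL_2(4)\cong\PSL_2(5)$, reproducing the list of Section \ref{mtg}. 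The argument should reduce the arbitrary ring $R$ to its residue fields and then invoke the known finite simple subgroups of $\GL_2$ over a field.

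For $M=M(q)$ a Paige loop, the triality group is of $\mathrm{P}\Omega_8^+(q)$ type with $S\cong S_3$ acting as triality, and I would classify the irreducible $S$-admissible kernels by linking them to the octonion structure of Section \ref{sml}. The expectation is that the only such modules of small dimension are the seven-dimensional space $\I^\bot$ and, in characteristic $2$, its six-dimensional quotient, giving exactly $M(q)\rd\FF_q^{\oplus 7}$ and $M(q)\rd\FF_q^{\oplus 6}$, together with the exceptional $M(2)\rd\FF_p^{\oplus 7}$ arising from the embedding $M(2)\hookrightarrow M(p)$. The nonsplit case, where $G_E$ does not split over $V$, should be confined to central extensions $1\to\ZZ/2\ZZ\to E\to M(q)\to 1$ and handled through the Schur multiplier of the Paige loops, identifying them with the norm-one covers $\SL(A)$.

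The main obstacle is the module-theoretic heart of part (a): proving that no further irreducible $S$-admissible modules exist. Over an arbitrary commutative ring $R$ this is considerably more delicate than ordinary representation theory over a field, since one must control the ring and the triality constraint simultaneously; the crucial missing ingredient is a uniform dimension bound for admissible modules of the orthogonal triality groups, in the spirit of Lemma \ref{dim} but now for $\mathrm{P}\Omega_8^+$ rather than the wreathlike case, preceded by a reduction to residue fields. A secondary difficulty is ruling out nonsplit extensions with larger abelian kernel, which requires a vanishing statement for the appropriate second cohomology of each finite simple Moufang loop. Because the enumeration in the associative case rests on the classification of finite simple groups, and because these module and cohomology computations have not been carried out in full, the statement remains, as its designation as a \emph{Conjecture} indicates, beyond the reach of the present methods; the above is the natural strategy by which one would attempt to settle it.
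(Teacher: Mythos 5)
The statement you are addressing is stated in the paper as a \emph{Conjecture}; the authors give no proof, and indeed the surrounding text only records the known examples and their minimality. Your text is, by your own admission in the final paragraph, a programme rather than a proof, so there is nothing to certify here. That said, the strategy you outline is the natural one and matches the spirit of the paper's constructions, but it contains concrete gaps beyond the ones you flag. First, your reduction in the associative case assumes that an admissible abelian kernel for the wreathlike triality group $G\times G\times G$ must be an outer tensor cube $V\#\,V\#\,V$; Lemma \ref{dim} only analyses modules of that particular shape and says nothing about arbitrary $S$-invariant $RT$-modules, nor about the fact that an irreducible module for a direct product is an outer tensor product $V_1\#\,V_2\#\,V_3$ only over a splitting field, with $S$-invariance forcing the factors to be isomorphic but not literally equal. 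Second, for a simple group $M$ the wreathlike group is not the only group with triality $G$ satisfying $[G,S]=G$ and $\M(G)=M$: Doro's analysis allows $G\cong M$ when $S_3$ embeds suitably into $\mathrm{Out}(M)$ (the $\mathrm{P}\Omega_8^+(q)$ phenomenon), so restricting to the wreathlike case is not a reduction but an additional hypothesis. Third, the asserted equivalence ``minimality of the extension $=$ irreducibility of $V$'' requires the dictionary between normal subloops of $\M(G_E)$ contained in $U$ and normal $S$-subgroups of $G_E$ contained in $V$, which is not a bijection in general and is nowhere established.

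The remaining steps — the uniform dimension bound for admissible modules of the orthogonal triality groups, the reduction from a commutative ring $R$ to its residue fields, and the second-cohomology vanishing needed to confine nonsplit extensions to the $\ZZ/2\ZZ$-central case — are exactly the open content of the conjecture, so your proposal does not advance beyond restating the problem in triality-group language. In short: the paper proves nothing here, and neither does your proposal; treat it as a research plan whose first two reductions already need justification before the ``module-theoretic heart'' can even be formulated correctly.
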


\newpage

\end{document}